\theoremstyle{plain}
\newtheorem{thm}{Theorem}[section]
\newtheorem{lem}[thm]{Lemma}
\newtheorem{prop}[thm]{Proposition}
\newtheorem{cor}[thm]{Corollary}
\newtheorem*{thm*}{Main Theorem}
\newtheorem*{prop*}{Proposition}
\newtheorem*{cor*}{Corollary}
\theoremstyle{definition}
\newtheorem{defn}[thm]{Definition}
\newtheorem{rmk}[thm]{Remark}
\newtheorem*{quest*}{Question}
\DeclareMathOperator{\op}{op} 
\renewcommand{\o}{\circ}
\newcommand{\R}{\mathbb{R}}
\newcommand{\Z}{\mathbb{Z}}
\newcommand{\N}{\mathbb{N}}
\renewcommand{\H}{\mathbb{H}}
\newcommand{\s}{\sigma}
\newcommand{\ra}{\rightarrow}
\newcommand{\Ra}{\Rightarrow}
\newcommand{\cu}{\subseteq}
\newcommand{\g}{\gamma}
\newcommand{\mbb}{\mathbb}
\newcommand{\mc}{\mathcal}
\newcommand{\mf}{\mathfrak}
\newcommand{\x}{\times}
\newcommand{\eps}{\epsilon}
\newcommand{\Aut}{\mathrm{Aut}}
\newcommand{\mscr}{\mathscr}
\newcommand{\crt}{\mathrm{crt}}
\newcommand{\Cr}{\mathrm{cr}}
\newcommand{\Sym}{\mathrm{Sym}}
\newcommand{\lk}{\mathrm{lk}}
\renewcommand{\ll}{\llbracket}
\newcommand{\rr}{\rrbracket}
\newcommand{\CAT}{{\rm CAT(0)}}
\newcommand{\CA}{{\rm CAT(-1)}}
\title[$\CAT$ cube complexes are determined by their boundary]{$\CAT$ cube complexes are determined by their boundary cross ratio}
\author{Jonas Beyrer, Elia Fioravanti and Merlin Incerti-Medici}
\begin{document}

\begin{abstract}
We introduce a $\Z$--valued cross ratio on Roller boundaries of $\CAT$ cube complexes. We motivate its relevance by showing that every cross-ratio preserving bijection of Roller boundaries uniquely extends to a cubical isomorphism. Our results are strikingly general and even apply to infinite dimensional, locally infinite cube complexes with trivial automorphism group.
\end{abstract}

\maketitle

\section{Introduction.}

Gromov boundaries of $\CA$ spaces are naturally endowed with a notion of \emph{cross ratio}. A classical example is provided by the standard projective cross ratio on $\partial_{\infty}\H^2\simeq\R\mbb{P}^1$. In the present paper, we introduce a similar object on the \emph{Roller boundary} of any $\CAT$ cube complex $X$ and show that this suffices to fully reconstruct the structure of $X$.

Our motivation essentially comes from two separate points of view. First, the theory of cube complexes has become fundamental within geometric group theory, proving extremely fruitful in relation to various questions stemming from low dimensional topology and group theory. The geometry of many interesting groups is encoded by a $\CAT$ cube complex, from classical examples such as right-angled Artin and Coxeter groups, to more recent discoveries like hyperbolic $3$--manifold or free-by-cyclic groups \cite{Bergeron-Wise,Kahn-Markovic,Hagen-Wise1,Hagen-Wise2}, to pathological situations such as Thompson's groups \cite{Farley} and Higman's group \cite{Martin}.

As a second perspective, boundary cross ratios provide a valuable tool in the study of negatively and non-positively curved spaces, often appearing in relation to strong rigidity results \cite{Otal-Ann,Bourdon-IHES,Bourdon-GAFA}. If $X$ is a Gromov hyperbolic or $\CAT$ space, its \emph{quasi-isometry} type is fully determined by a cross ratio, respectively, on the Gromov or contracting boundary \cite{Paulin,Mousley-Russell,Charney-Cordes-Murray}. By contrast, it is an open question whether these boundary cross ratios can be used to recover the \emph{isometry} type of $X$ (cf.\ \cite{Biswas1,Beyrer}). 

When $X$ is the universal cover of a closed, negatively-curved Riemannian manifold, the latter problem turns out to be essentially equivalent to the famous marked-length-spectrum rigidity conjecture (Problem~3.1 in \cite{Burns-Katok}), which has been fully solved only in dimension two \cite{Otal-Ann}.
   
In the present paper, we aim to overcome some of the issues arising in general $\CAT$ spaces by relying on the ``combinatorial structure'' available in cube complexes. This will allow us to define a simpler boundary cross ratio, which in fact only takes integer values. We will show that this cross ratio fully determines the $\CAT$ cube complex $X$ up to isometry and, in fact, even up to \emph{cellular} isometries --- ``\emph{cubical isomorphisms}'' in our terminology.

To this end, we observe that every $\CAT$ cube complex $X$ is endowed with two natural metrics: the $\CAT$ metric and the $\ell^1$ (or \emph{combinatorial}) metric. When $X$ is finite dimensional, these are bi-Lipschitz equivalent. We will restrict our attention to the $\ell^1$ metric --- which we denote by $d$ --- as this enables us to better exploit the cellular structure of $X$.

It is then natural to consider the horoboundary of the metric space $(X,d)$, usually known as \emph{Roller boundary} $\partial X$. This has by now become a standard tool in the study of cube complexes; see e.g.\ \cite{BCGNW,Nevo-Sageev,CFI,Fernos,Fernos-Lecureux-Matheus} for a (non-exhaustive) list of applications. We remark that, unlike Gromov and visual boundaries, Roller boundaries are always totally disconnected, as is reasonable to expect from objects associated to a cell complex.

By analogy with the $\CA$ context, it is reasonable to define a \emph{cross ratio}\footnote{For us, a \emph{cross ratio} is an $\R$--valued function defined on generic $4$--tuples of boundary points and satisfying certain symmetries (see (i)--(iv) in Section~\ref{cross ratio section}). This should be compared to analogous notions in \cite{Otal-IberoAm,Hamenstaedt-ErgodTh,Labourie-IM}.} in terms of \emph{Gromov products}\footnote{See Definition~III.H.1.19 in \cite{BH} or Section~\ref{cross ratio section} below for a definition.} in the metric space $(X,d)$. The result is a function $\Cr\colon\mscr{A}\ra\Z\cup\{\pm\infty\}$, defined on a subset $\mscr{A}\cu(\partial X)^4$. We show that this cross ratio admits the alternative expression:
\begin{align*}
\Cr(x,y,z,w)=\#\mscr{W}(x,z|y,w)-\#\mscr{W}(x,w|y,z),
\end{align*}
where $\mscr{W}(x,z|y,w)$ denotes the set of hyperplanes of $X$ that separate $x$ and $z$ from $y$ and $w$. In particular, it is clear that $\Cr$ is preserved by the diagonal action of $\Aut(X)$ on $(\partial X)^4$ and independent of any choices in its definition.

Generalising a result of \cite{Beyrer-Schroeder} for trees, we then prove the following.

\begin{thm*}
Let $X$ and $Y$ be $\CAT$ cube complexes with no extremal vertices and not isomorphic to $\R$. Every cross-ratio preserving bijection $f\colon\partial X\ra\partial Y$ uniquely extends to a cubical isomorphism $F\colon X\ra Y$. 
\end{thm*}

Note that the theorem does not require finite dimensional or locally finite cube complexes, nor any group action. The requirement that no vertex be extremal is necessary in order to prevent us from modifying a bounded portion of the cube complex without affecting the boundary; in the case of trees, this would amount to requiring that there are no leaves. 

We introduce extremal vertices in Definition~\ref{sharp corners defn}. Absence of extremal vertices can be viewed as an intermediate requirement between the geodesic extension property for the $\ell^1$ and $\CAT$ metrics\footnote{For complete cube complexes, it is well-known that the $\CAT$ metric satisfies the geodesic extension property if and only if $X$ has no free faces (Proposition~II.5.10 in \cite{BH}). Such spaces do not have extremal vertices (Remark~\ref{no extremal vertices}) and every cube complex without extremal vertices has the geodesic extension property w.r.t.\ the $\ell^1$ metric (Lemma~\ref{straight rays}).}. The Main Theorem holds more generally when every vertex satisfies Lemma~\ref{skinny points}.

In \cite{BF1,BF2}, the Main Theorem is extended to cross-ratio preserving bijections between much smaller subsets of the Roller boundaries. This has applications to length-spectrum rigidity questions for actions on cube complexes. The price to pay is that stronger assumptions need to be imposed on $X$ and $Y$. 

\medskip
We now briefly sketch the strategy of proof of the Main Theorem. To any three points $x,y,z\in\partial X$, we can associate two well-known objects: the \emph{interval} $I(x,y)\cu X\cup\partial X$ and the \emph{median} $m(x,y,z)\in I(x,y)$; see e.g.\ \cite{Nevo-Sageev}. These should be interpreted, respectively, as the union of all infinite geodesics between $x$ and $y$ and as a barycentre for the triangle $xyz$. It is a natural attempt to define the map $F:X\to Y$ as $F(v):=m(f(x),f(y),f(z))$, assuming for simplicity that there exist points $x,y,z\in\partial X$ with $v=m(x,y,z)$. 

However, even relying on the assumption that $f$ preserves cross ratios, it is a priori unclear whether $F$ is well-defined, i.e.\ independent of the choice of $x,y,z$. As an illustration of this, consider the two cube complexes $X$ and $Y$ pictured in in Figure~\ref{spanning a cube}. In both cases, the points $x,y,z,z'$ lie in the Roller boundary and satisfy $\Cr(x,y,z,z')=0$; the same holds if we permute the four points. In other words, cross ratios involving only $x$, $y$, $z$ and $z'$ cannot tell the two cases apart, even though we have $m(x,y,z)=m(x,y,z')$ in $X$ and $m(x,y,z)\neq m(x,y,z')$ in $Y$.

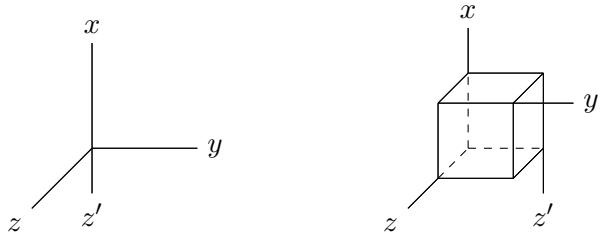
\begin{figure}
\begin{tikzpicture}
\begin{scope}
\draw [fill] (-0.4,-0.4) -- (0.4,0.4);
\draw [fill] (0.4,0.4) -- (0.4,1.8);
\draw [fill] (0.4,0.4) -- (1.8,0.4);
\draw [fill] (0.4,0.4) -- (0.4,-0.2);
\node [below left] at (-0.4, -0.4) {$z$};
\node [above] at (0.4,1.8) {$x$};
\node [right] at (1.8,0.4) {$y$};
\node [below] at (0.4,-0.2) {$z'$};
\end{scope}

\begin{scope}[xshift=5cm]
\draw [fill] (0,0) -- (0,1);
\draw [fill] (0,0) -- (1,0);
\draw [fill] (1,0) -- (1, 1);
\draw [fill] (0,1) -- (1, 1);
\draw [fill] (0,1) -- (0.4, 1.4);
\draw [fill] (1,0) -- (1.4, 0.4);
\draw [fill] (1,1) -- (1.4, 1.4);
\draw [fill] (0.4,1.4) -- (1.4,1.4);
\draw [dashed] (0,0) -- (0.4, 0.4);
\draw [dashed] (0.4,0.4) -- (0.4, 1.4);
\draw [dashed] (0.4,0.4) -- (1.4, 0.4);
\draw [fill] (1.4,0.4) -- (1.4, 1.4);
\draw [fill] (0,0) -- (-0.4, -0.4);
\draw [fill] (0.4,1.4) -- (0.4, 2);
\draw [fill] (1,1) -- (1.8, 1);
\draw [fill] (1.4,0.4) -- (1.4, -0.2);
\node [below left] at (-0.4, -0.4) {$z$};
\node [above] at (0.4,2) {$x$};
\node [right] at (1.8,1) {$y$};
\node [below] at (1.4,-0.2) {$z'$};
\end{scope}
\end{tikzpicture}
\caption{The cube complex $X$, on the left, is a tree with a single branch point and four boundary points $x$, $y$, $z$ and $z'$. Pictured on the right is a portion of $Y\simeq\R^3$; the points $x$, $y$, $z$ and $z'$ lie in the Roller boundary $\partial\R^3$.}
\label{spanning a cube} 
\end{figure}

We resolve the problem by only representing ${v=m(x,y,z)}$ with triples $(x,y,z)$ where $v$ disconnects the interval $I(x,y)$; in this case, we say that $x$ and $y$ are \emph{opposite} with respect to $v$. Examining Figure~\ref{spanning a cube}, it is easy to see that $x$ and $y$ are opposite in $X$, but not in $Y$. It can be shown that most vertices $v$ are of the form $v=m(x,y,z)$ for a triple $(x,y,z)$ such that $x$ and $y$ are opposite (Lemma~\ref{skinny points}) and, moreover, such triples can be characterised in terms of cross ratios (Lemma~\ref{op vs lcrt}).

\medskip
We conclude the introduction by remarking that the Main Theorem does not generalise to cross-ratio preserving \emph{embeddings} $\partial X\hookrightarrow \partial Y$. This stands in contrast with the behaviour of trees \cite{Beyrer-Schroeder} and rank-one symmetric spaces \cite{Bourdon-IHES}. A simple counterexample is provided by the cube complexes in Figure~\ref{spanning a cube} and the map $\partial X\hookrightarrow\partial Y=\partial\R^3$ that pairs points of the same name. 

For a counterexample involving \emph{cocompact} spaces, the above can be ad\-apt\-ed as follows. Let $X$ be the $4$--regular tree $T_4$ with each edge divided into three edges of length $1$. Let $Y$ be the $4$--regular tree $T_4$ with each vertex blown up to a $3$--cube as in Figure~\ref{spanning a cube}; thus, for every vertex of $T_4$, there is a $3$--cube in $Y$ and, for every edge of $T_4$, there is an edge of $Y$ joining two cubes. It is not hard to check that the natural homeomorphism $\partial X\ra\partial Y$ is cross-ratio preserving. Now, if we denote by $S$ the universal cover of the Salvetti complex of $\Z^3\ast\Z$, we can embed isometrically $Y\hookrightarrow S$ as a convex subcomplex. This gives rise to a cross-ratio preserving embedding $\partial X\simeq\partial Y\hookrightarrow\partial S$, which does not extend to an isometric embedding $X\hookrightarrow S$.



\medskip
{\bf Acknowledgements.} We thank Cornelia Dru\c{t}u, Ruth Charney, Tobias Hartnick and Viktor Schroeder for helpful conversations. We also express our gratitude to the organisers of the following conferences, where part of this work was carried out: \emph{Third GEAR (Junior) Retreat}, Stanford, 2017; \emph{Moduli Spaces}, Ventotene, 2017; \emph{Young Geometric Group Theory VII}, Les Diablerets, 2018; \emph{Topological and Homological Methods in Group Theory}, Bielefeld, 2018; \emph{3--Manifolds and Geometric Group Theory}, Luminy, 2018; \emph{Representation varieties and geometric structures in low dimensions}, Warwick, 2018. EF also thanks Viktor Schroeder for a visit at UZH. 

JB and MI-M acknowledge support by the Swiss National Science Foundation under Grant~200020$\setminus$175567. EF was supported by the Clarendon Fund and the Merton Moussouris Scholarship.

\section{Preliminaries.}

\subsection{$\CAT$ cube complexes.}

For an introduction to $\CAT$ cube complexes, we refer the reader to \cite{Sageev-notes}. In this subsection, we only recall some of the relevant terminology.

Let $X$ be a simply connected cube complex satisfying Gromov's no-$\triangle$-condition (see 4.2.C in \cite{Gromov-HypGps} and Chapter~II.5 in \cite{BH}). The Euclidean metrics on its cubes fit together to yield a $\CAT$ metric on $X$. In the present paper, however, we prefer to endow $X$ with its \emph{combinatorial metric}. More precisely, if $v,w\in X$ are vertices, $d(v,w)$ is the defined as the minimal length of a path connecting $v$ and $w$ within the $1$--skeleton of $X$. When $X$ is finite dimensional, the $\CAT$ and combinatorial metrics are bi-Lipschitz equivalent and complete.

Unless specified otherwise, {\bf all points $v\in X$ are implicitly understood to be vertices}; we do not distinguish between $X$ and its $0$--skeleton. Throughout the paper, the letter $d$ denotes the combinatorial metric on $X$. All geodesics are meant with respect to the combinatorial metric $d$; in particular, they are sequences of edges. We will nevertheless refer to $X$ by the more familiar expression `\emph{$\CAT$ cube complex}'. 

For every vertex $v\in X$, we define a graph $\lk(v)$. Its vertices are the edges of $X$ incident to $v$; vertices of $\lk(v)$ are joined by an edge if and only if the corresponding edges of $X$ span a square. We refer to $\lk(v)$ as the \emph{link} of $v$.

Let $\mscr{W}(X)$ and $\mscr{H}(X)$ be, respectively, the sets of hyperplanes and halfspaces of $X$. We simply write $\mscr{W}$ and $\mscr{H}$ when there is no need to specify the cube complex. We denote by $\mf{h}^*$ the complement of the halfspace $\mf{h}$. 

Two distinct hyperplanes are \emph{transverse} if they cross. If $e\cu X$ is an edge, we write $\mf{w}(e)$ for the hyperplane dual to $e$. We say that a hyperplane $\mf{w}$ is \emph{adjacent} to a point $v\in X$ if $\mf{w}=\mf{w}(e)$ for an edge $e$ incident to $v$.

We will generally confuse geodesics and their images as subsets of $X$. If $\g\cu X$ is an (oriented) geodesic, we denote by $\g(0)$ its initial vertex and by $\g(n)$ its $n$-th vertex. We refer to bi-infinite geodesics simply as \emph{lines}.

Every geodesic $\g\cu X$ can be viewed as a collection of edges; distinct edges $e,e'\cu\g$ must yield distinct hyperplanes $\mf{w}(e)$ and $\mf{w}(e')$. We write $\mscr{W}(\g)$ for the collection of hyperplanes crossed by (the edges of) $\g$. If two geodesics $\g$ and $\g'$ share an endpoint $v\in X$, their union $\g\cup\g'$ is again a geodesic if and only if $\mscr{W}(\g)\cap\mscr{W}(\g')=\emptyset$. 

\begin{lem}\label{straight rays 2} 
Given $v\in X$ and rays $r_1,r_2\cu X$ based at $v$, let $\mscr{W}_i\cu\mscr{W}(r_i)$ denote the subset of hyperplanes adjacent to $v$. The union $r_1\cup r_2$ is a line if and only if $\mscr{W}_1\cap\mscr{W}_2=\emptyset$.
\end{lem}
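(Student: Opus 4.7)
The forward direction is immediate: if $r_1\cup r_2$ is a line then it is a geodesic, so by the criterion recalled just above we have $\mscr{W}(r_1)\cap\mscr{W}(r_2)=\emptyset$, and therefore $\mscr{W}_1\cap\mscr{W}_2=\emptyset$.

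For the converse, assume $\mscr{W}_1\cap\mscr{W}_2=\emptyset$ and argue by contradiction. Suppose $\mscr{W}(r_1)\cap\mscr{W}(r_2)$ is nonempty and choose $\mf{w}$ in this intersection minimising the index $j$ of the edge $e_j=[v_{j-1},v_j]$ of $r_1$ dual to it. If $\mf{w}$ is adjacent to $v$ then $\mf{w}\in\mscr{W}_1\cap\mscr{W}_2$, already a contradiction. Otherwise $v$ does not lie in the carrier $N(\mf{w})$, and the plan is to exhibit a hyperplane $\mf{w}'\in\mscr{W}(r_1)\cap\mscr{W}(r_2)$ crossed by $r_1$ strictly before $e_j$, violating the minimality of $j$.

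To produce $\mf{w}'$ I invoke the standard fact that any vertex $v$ outside a convex subcomplex $C$ of a $\CAT$ cube complex is separated from $C$ by some hyperplane disjoint from $C$---a consequence of the existence and uniqueness of the gate projection $\pi_C(v)$, which requires no dimension or local-finiteness hypothesis. Applied to $C=N(\mf{w})$ (convex), this produces $\mf{w}'$ with $v$ and $N(\mf{w})$ on opposite sides of $\mf{w}'$ and $\mf{w}'$ disjoint from $\mf{w}$. Now $v_{j-1}$ is an endpoint of $e_j$ and hence lies in $N(\mf{w})$, so any geodesic from $v$ to $v_{j-1}$ must cross $\mf{w}'$; in particular $r_1|_{[0,j-1]}$ does, placing $\mf{w}'$ in $\mscr{W}(r_1)$ with crossing index strictly less than $j$. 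Applying the same reasoning to the endpoint of the $\mf{w}$-crossing edge of $r_2$ lying closer to $v$ (which also lies in $N(\mf{w})$) yields $\mf{w}'\in\mscr{W}(r_2)$, completing the contradiction. The only non-trivial input is the separation/gate-projection statement for the carrier $N(\mf{w})$; everything else is routine bookkeeping with halfspaces.
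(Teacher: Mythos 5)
Your proof is correct and follows essentially the same route as the paper's. The paper directly takes the hyperplane closest to $v$ among those separating $v$ from $\mf{w}$ and shows, by that minimality, that it is adjacent to $v$ and lies in both $\mscr{W}(r_i)$; you instead minimise the crossing index along $r_1$ and derive a contradiction via the gate-projection/separation fact for the carrier $N(\mf{w})$. Both arguments hinge on the same underlying input (a vertex outside a convex subcomplex is separated from it by some hyperplane), which the paper uses implicitly and you invoke explicitly.
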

\begin{proof}
If $r_1\cup r_2$ is not a geodesic, there exists $\mf{w}\in\mscr{W}(r_1)\cap\mscr{W}(r_2)$. If $\mf{w}$ is not adjacent to $v$, let $\mf{u}$ be a hyperplane closest to $v$ among those that separate $v$ from $\mf{w}$; otherwise, let us set $\mf{u}=\mf{w}$. If there existed a hyperplane $\mf{u}'$ separating $v$ and $\mf{u}$, we would have $d(v,\mf{u}')<d(v,\mf{u})$ and $\mf{u}'$ would separate $v$ and $\mf{w}$; this would contradict our choice of $\mf{u}$. We conclude that $\mf{u}$ is adjacent to $v$ and, since $\mf{u}$ must lie in both $\mscr{W}(r_i)$, we have $\mf{u}\in\mscr{W}_1\cap\mscr{W}_2$.
\end{proof}

A subset $\s\cu\mscr{H}$ is an \emph{ultrafilter} if it satisfies the following two conditions:
\begin{enumerate}
\item given any two halfspaces $\mf{h},\mf{k}\in\s$, we have $\mf{h}\cap\mf{k}\neq\emptyset$;
\item for any hyperplane $\mf{w}\in\mscr{W}$, a side of $\mf{w}$ lies in $\s$.
\end{enumerate}
We say that $\s$ is a \emph{DCC ultrafilter} if, moreover, every descending chain of halfspaces in $\s$ is finite. For every vertex $v\in X$, we denote by $\s_v\cu\mscr{H}$ the set of halfspaces containing $v$. This is a DCC ultrafilter.

Consider now the map $\iota\colon X\ra 2^{\mscr{H}}$ taking each vertex $v$ to the set $\s_v$. The image $\iota (X)$ coincides with the collection of all DCC ultrafilters. Endowing $2^{\mscr{H}}$ with the product topology, we can consider the closure $\overline{\iota(X)}$, which is precisely the set of all ultrafilters. Equipped with the subspace topology, this is a compact Hausdorff space known as the \emph{Roller compactification} of $X$; we denote it by $\overline X$. The \emph{Roller boundary} is $\partial X:=\overline X\setminus X$. 

We prefer to imagine $\partial X$ as a set of points at infinity, rather than a set of ultrafilters. We will therefore write $x\in\partial X$ for points in the Roller boundary and employ the notation $\s_x\cu\mscr{H}$ to refer to the ultrafilter representing $x$.

Although this will not be needed in the present paper, it is interesting to observe that the Roller boundary $\partial X$ is naturally homeomorphic to the horofunction boundary of the metric space $(X,d)$. This is an unpublished result of U.\ Bader and D.\ Guralnik; see \cite{Caprace-Lecureux} or \cite{Fernos-Lecureux-Matheus} for a proof. Note that, on the other hand, the horofunction boundary with respect to the $\CAT$ metric would simply coincide with the visual boundary of $X$.

Given a (combinatorial) ray $r\cu X$ and a hyperplane $\mf{w}\in\mscr{W}$, there exists a unique side $\mf{h}$ of $\mf{w}$ such that $r\setminus\mf{h}$ is bounded. The collection of all such halfspaces forms an ultrafilter and we denote by $r^+\in\partial X$ the corresponding point; we refer to $r^+$ as the \emph{endpoint at infinity} of $r$. 

Fixing a basepoint $v\in X$, every point of $\partial X$ is of the form $r^+$ for a ray $r$ based at $v$. This yields a bijection between points of $\partial X$ and rays based at $v$, where we need to identify the rays $r_1$ and $r_2$ whenever $\mscr{W}(r_1)=\mscr{W}(r_2)$. See Proposition~A.2 in \cite{Genevois} for details.

Note that, given $v\in X$ and $\mf{h}\in\mscr{H}$, we have $v\in\mf{h}$ if and only if $\mf{h}\in\s_v$. We thus extend the halfspace $\mf{h}\cu X$ to a subset $\overline{\mf{h}}\cu\overline X$ by declaring that a point $x\in\partial X$ lies in $\overline{\mf{h}}$ if and only if $\mf{h}\in\s_x$. In particular, $\overline{\mf{h}}$ and $\overline{\mf{h}^*}$ provide a partition of $\overline X$ with $\overline{\mf{h}}\cap X=\mf{h}$ and $\overline{\mf{h}^*}\cap X=\mf{h}^*$. For ease of notation, we will generally omit the overline symbol and will not distinguish between a halfspace $\mf{h}\cu X$ and its extension $\overline{\mf{h}}\cu\overline X$.

Given subsets $A,B\cu\overline X$, we employ the notation:
\begin{align*}
&\mscr{H}(A|B)=\{\mf{h}\in\mscr{H}\mid B\cu\mf{h},~A\cu\mf{h}^*\}, \\
&\mscr{W}(A|B)=\{\mf{w}\in\mscr{W}\mid \text{a side of $\mf{w}$ lies in $\mscr{H}(A|B)$}\}.
\end{align*}
It is immediate from the definitions that $\mscr{W}(x|y)\neq\emptyset$ if and only if $x,y\in\overline X$ are distinct. If $u,v\in X$ are vertices, we have $d(u,v)=\#\mscr{W}(u|v)$.  

Given $x,y\in\overline X$, the \emph{interval} between $x$ and $y$ is the set
\[I(x,y)=\{z\in\overline X\mid\mscr{W}(z|x,y)=\emptyset\}.\]
We always have $I(x,x)=\{x\}$. In general, $I(x,y)\cap X$ coincides with the union of all (possibly infinite) geodesics with endpoints $x$ and $y$. In particular, if $u,v,w\in X$ are vertices, we have $w\in I(u,v)$ if and only if $d(u,v)=d(u,w)+d(w,v)$. 

For any three points $x,y,z\in\overline X$, there exists a unique $m(x,y,z)\in\overline X$ that lies in all three intervals $I(x,y)$, $I(y,z)$ and $I(z,x)$. We refer to $m(x,y,z)$ as the \emph{median} of $x$, $y$ and $z$ and remark that it is represented by the ultrafilter
\[(\s_x\cap\s_y)\cup(\s_y\cap\s_z)\cup(\s_z\cap\s_x).\] 
If $v_1,v_2,v_3\in X$, the median $m=m(v_1,v_2,v_3)$ is the only vertex satisfying $d(v_i,v_j)=d(v_i,m)+d(m,v_j)$ for all $1\leq i<j\leq 3$. The operator $m$ determines a continuous map $m\colon\overline{X}^3\ra\overline X$ that endows $\overline X$ with a structure of \emph{median algebra}. See e.g.\ \cite{Roller} for a definition of the latter notion.

Given $x,y,z\in\overline X$, the median $m=m(x,y,z)$ is the only point of $I(x,y)$ with the property that $m\in I(z,w)$ for every $w\in I(x,y)$. In particular, $m$ is the unique point of $I(x,y)$ that is closest to $z$. For this reason, we also refer to $m(x,y,z)$ as the \emph{gate-projection} of $z$ to $I(x,y)$.

\subsection{Extremal vertices and straight geodesics.}

Let $X$ be a $\CAT$ cube complex. We introduce the following two notions.

\begin{defn}\label{sharp corners defn}
A vertex $v\in X$ is \emph{extremal} if there exists an edge $e\cu X$ incident to $v$ such that any other edge incident to $v$ spans a square with $e$.
\end{defn}

Equivalently, $\lk(v)$ is a cone over one of its vertices. This happens if and only if a neighbourhood of $v$ splits as $[0,1)\x N$ for a subcomplex $N\cu X$.

\begin{defn}\label{straight geodesic defn}
A geodesic $\g\cu X$ is \emph{straight} if no two hyperplanes in $\mscr{W}(\g)$ are transverse.
\end{defn}

Our interest in cube complexes with no extremal vertices is motivated by the following straightforward observation (proof omitted).

\begin{lem}\label{straight rays} 
If $X$ has no extremal vertices, every edge can be extended to a straight (bi-infinite) line. 
\end{lem}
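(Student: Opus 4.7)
The plan is to reduce to a one-edge extension step and iterate. I aim to show: if $\gamma$ is a finite straight geodesic ending at $v$ with final edge $e_k$, then there exists an edge $e'\neq e_k$ at $v$ such that $\gamma\cup e'$ is again a straight geodesic. Starting from the given edge and iterating this step at both endpoints produces the required bi-infinite straight line.

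To build $e'$, I would use that $v$ is not extremal (Definition~\ref{sharp corners defn}) to pick an edge $e'\neq e_k$ at $v$ that does not span a square with $e_k$. Let $\mf{u}_1,\dots,\mf{u}_k$ denote the hyperplanes crossed by $\gamma$ in order, and let $\mf{h}_i$ be the side of $\mf{u}_i$ containing $v$; straightness of $\gamma$ yields the proper chain $\mf{h}_1\supsetneq\cdots\supsetneq\mf{h}_k$. Setting $\mf{w}:=\mf{w}(e')$ and letting $\mf{j}$ be the side of $\mf{w}$ containing $v$, let $v'\in\mf{j}^*$ be the other endpoint of $e'$. Since at most one edge at $v$ is dual to any given hyperplane, $e'\neq e_k$ forces $\mf{w}\neq\mf{u}_k$, so $e'$ does not cross $\mf{u}_k$ and $v'\in\mf{h}_k$.

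The key preliminary fact I would invoke is that two distinct hyperplanes adjacent to a common vertex are transverse if and only if their dual edges span a square at that vertex. I would prove it by the standard median trick: given transversality of $\mf{w}_1,\mf{w}_2$ at $v$ with neighbours $v_1,v_2$, pick $u$ in the opposite quadrant and observe that $m(v_1,v_2,u)$ lies in $I(v_1,v_2)\cap\mf{w}_1^*\cap\mf{w}_2^*$, producing a fourth vertex that completes the square at $v$ via the flag condition on $\lk(v)$. Applied to $\mf{w}$ and $\mf{u}_k$, the non-square assumption on $e',e_k$ forces $\mf{w},\mf{u}_k$ to be non-transverse; hence $\mf{j}$ and $\mf{h}_k$ are comparable.

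The main obstacle is the final halfspace bookkeeping. Non-transversality of $\mf{w}$ and $\mf{u}_k$ makes exactly one of the four intersections $\mf{j}\cap\mf{h}_k$, $\mf{j}\cap\mf{h}_k^*$, $\mf{j}^*\cap\mf{h}_k$, $\mf{j}^*\cap\mf{h}_k^*$ empty. The conditions $v\in\mf{j}\cap\mf{h}_k$ and $v'\in\mf{j}^*\cap\mf{h}_k$ eliminate the first and third as candidates, so we are left with either $\mf{j}\subsetneq\mf{h}_k$ or $\mf{h}_k^*\subsetneq\mf{j}$. Combined with $\mf{h}_k\subsetneq\mf{h}_i$ for $i<k$, each of these strict inclusions propagates to $\mf{u}_i$, giving simultaneously $\mf{w}\neq\mf{u}_i$ and non-transversality of $\mf{w}$ with $\mf{u}_i$. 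Thus $\mf{w}\notin\mscr{W}(\gamma)$ and $\mf{w}$ is transverse to no element of $\mscr{W}(\gamma)$, so $\gamma\cup e'$ is a straight geodesic extending $\gamma$, and iteration completes the proof.
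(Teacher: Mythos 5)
Your overall plan --- one-edge straight extension at a non-extremal vertex, nested halfspaces, propagation of non-transversality to all of $\mscr{W}(\gamma)$, and iteration at both ends --- is correct, and it is presumably the ``straightforward observation'' whose proof the paper omits. The halfspace bookkeeping in your final paragraph is carried out carefully and is right.

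The one soft spot is the justification of the preliminary fact that two distinct hyperplanes adjacent to $v$ are transverse if and only if their dual edges span a square at $v$. Your median argument correctly produces a fourth vertex $m\in I(v_1,v_2)$ lying in the quadrant opposite to $v$, so that $v,v_1,m,v_2$ is a $4$--cycle in the $1$--skeleton; but appealing to ``the flag condition on $\lk(v)$'' is not what closes the square. Gromov's no-$\triangle$ condition only upgrades \emph{pairwise adjacencies} in $\lk(v)$ to higher-dimensional simplices; it cannot manufacture the edge of $\lk(v)$ between the vertices corresponding to $e_1$ and $e_2$, which is precisely the square you are trying to produce. The step that actually does the work is simple connectivity (equivalently, the median structure of the $1$--skeleton): since $\mf{w}_1$ and $\mf{w}_2$ are the only hyperplanes separating $v_1$ from $v_2$, one checks that $I(v_1,v_2)\cap X=\{v,v_1,m,v_2\}$; this interval is a convex, hence simply connected, subcomplex, so the $4$--cycle must bound a $2$--cell, which can only be a square. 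With that repair your argument is complete and matches the natural proof the paper leaves to the reader.
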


We say that $X$ is \emph{complete} if there is no infinite ascending chain of cubes (cf.\ \cite{Leary}). In particular, finite dimensional cube complexes are always complete. A \emph{free face} in $X$ is a non-maximal cube $c\cu X$ that is contained in a unique maximal cube.

\begin{rmk}\label{no extremal vertices}
If $X$ is complete and it has no free faces, then $X$ has no extremal vertices. Indeed, consider a vertex $v\in X$ contained in an edge $e$. Since $X$ is complete, there exists a maximal cube $c_1\cu X$ containing $e$. Let $c\cu c_1$ be the face such that $v\in c$ and $c_1\simeq c\x e$. Since $c$ is not a free face, there exists a maximal cube $c_2\cu X$ with $c_1\cap c_2=c$. Let $e'\cu c_2$ be an edge such that $v\in e'$ and $e'\not\cu c$. The edges $e$ and $e'$ do not span a square or $e'$ and $c_1$ would span a cube properly containing $c_1$. Hence $v$ is not an extremal vertex.
\end{rmk}

Nevertheless, the reader will realise that $\CAT$ cube complexes with no extremal vertices are much more common than cube complexes with no free faces. For instance, the (universal cover of) the Davis complex \cite{Davis} associated to a right-angled Coxeter group $G$ often has free faces\footnote{More precisely, this happens if and only if the defining flag complex has free faces.}, but it only has extremal vertices when $G\simeq\Z/2\Z\x H$ for a parabolic subgroup $H$.

\section{Cross ratios on cube complexes.}\label{cross ratio section}

Let $X$ be a $\CAT$ cube complex with \emph{combinatorial metric} $d$. Given a base vertex $v\in X$, the \emph{Gromov product} of $x,y\in\overline X$ is given by:
\[(x\cdot y)_v:=\#\mscr{W}(v|x,y)=d\big(v,m(v,x,y)\big)\in\N\cup\{+\infty\}.\]
Note that $(x\cdot y)_v=+\infty$ if and only if $m(v,x,y)\in\partial X$. Whenever $x,y\in X$, the above quantity coincides with the usual Gromov product:
\[(x\cdot y)_v=\tfrac{1}{2}\cdot\big[d(v,x)+d(v,y)-d(x,y)\big].\] 
The following simple observation can be found as Lemma~2.3 in \cite{BF1}.

\begin{lem}\label{infinite Gromov product}
Consider $x,y,z\in\overline X$ and $v\in X$.
\begin{enumerate}
\item We have $m(x,y,z)\in X$ if and only if each of the three intervals $I(x,y)$, $I(y,z)$, $I(z,x)$ intersects $X$.
\item We have $(x\cdot y)_v<+\infty$ if and only if $I(x,y)$ intersects $X$.
\end{enumerate}
\end{lem}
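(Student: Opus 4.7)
The plan is to reduce both statements to two elementary observations. First: for any $a\in I(x,y)\cap X$, one has $\s_x\cap\s_y\cu\s_a$. Indeed, if $\mf{h}\in\s_x\cap\s_y$ satisfied $a\notin\mf{h}$, the hyperplane bounding $\mf{h}$ would lie in $\mscr{W}(a|x,y)$, contradicting $a\in I(x,y)$. Second: a finite union of subsets of $\mscr{H}$, each containing no infinite descending chain, itself contains no infinite descending chain. This is a one-line pigeonhole argument on any purported infinite descending chain in the union.

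For part (1), the forward direction is immediate from $m(x,y,z)\in I(x,y)\cap I(y,z)\cap I(z,x)$. For the converse, I would pick vertices $a\in I(x,y)\cap X$, $b\in I(y,z)\cap X$, $c\in I(z,x)\cap X$ and invoke the explicit formula
\[\s_{m(x,y,z)}=(\s_x\cap\s_y)\cup(\s_y\cap\s_z)\cup(\s_z\cap\s_x)\]
recalled in the preliminaries. By the first observation, the three pieces sit inside $\s_a$, $\s_b$, $\s_c$, each of which is a DCC ultrafilter. The pigeonhole observation then shows $\s_{m(x,y,z)}$ itself is DCC, so $m(x,y,z)\in X$.

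For part (2), I would simply use the identity $(x\cdot y)_v=d(v,m(v,x,y))$ given in the definition: since the combinatorial distance from $v\in X$ to any point of $\partial X$ is $+\infty$, we have $(x\cdot y)_v<+\infty$ if and only if $m(v,x,y)\in X$. Applying part (1) to the triple $(v,x,y)$ --- and noting that $v$ itself trivially lies in $I(v,x)\cap X$ and $I(v,y)\cap X$ --- this condition is equivalent to $I(x,y)\cap X\neq\emptyset$.

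I do not expect any serious obstacle: once the explicit ultrafilter formula for the median is in hand, the entire argument is bookkeeping about DCC, and the only verification of substance is the inclusion $\s_x\cap\s_y\cu\s_a$ above, which is a one-line unpacking of the definitions of interval and separating hyperplane.
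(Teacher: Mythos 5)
Your argument is correct. In the paper this lemma is only cited (it appears as Lemma~2.3 in~\cite{BF1}), so there is no in-text proof to compare against, but your reasoning stands on its own. The key inclusion $\s_x\cap\s_y\cu\s_a$ for $a\in I(x,y)\cap X$ is verified exactly as you say: a halfspace in $\s_x\cap\s_y$ not containing $a$ would produce a hyperplane in $\mscr{W}(a|x,y)$, contradicting $a\in I(x,y)$. Combined with the explicit ultrafilter formula $\s_{m(x,y,z)}=(\s_x\cap\s_y)\cup(\s_y\cap\s_z)\cup(\s_z\cap\s_x)$, you get $\s_{m(x,y,z)}\cu\s_a\cup\s_b\cup\s_c$, and the DCC property passes to finite unions (a strictly descending chain in a finite union has an infinite subchain in one piece) and to subsets; since DCC ultrafilters are exactly the vertices, $m(x,y,z)\in X$. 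The reduction of part~(2) to part~(1) applied to $(v,x,y)$ is also sound: $(x\cdot y)_v=d(v,m(v,x,y))$ is finite precisely when $m(v,x,y)\in X$ (because $d(v,\cdot)=+\infty$ on $\partial X$, as DCC is stable under finitely many halfspace flips), $v$ trivially lies in $I(v,x)\cap X$ and $I(v,y)\cap X$, and the only remaining condition from part~(1) is $I(x,y)\cap X\neq\emptyset$. No gaps.
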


Fixing $v\in X$, we consider the subset $\mscr{A}\cu(\overline X)^4$ of $4$--tuples $(x,y,z,w)$ such that at most one of the three values $(x\cdot y)_v+(z\cdot w)_v$, $(x\cdot z)_v+(y\cdot w)_v$ and $(x\cdot w)_v+(y\cdot z)_v$ is infinite. By part~$(2)$ of Lemma~\ref{infinite Gromov product}, the set $\mscr{A}$ does not depend on the choice of $v$. The map $\Cr_v\colon\mscr{A}\ra\Z\cup\{\pm\infty\}$ defined by: 
\[\Cr_v(x,y,z,w)=(x\cdot z)_v+(y\cdot w)_v-(x\cdot w)_v-(y\cdot z)_v\]
satisfies the following identities for all $4$--tuples $(x,y,z,w)$, $(x,y,z,t)$ and $(x,y,t,w)$ in $\mscr{A}$:
\begin{enumerate}
\item[(i)] $\Cr_v(x,y,z,w)=-\Cr_v(y,x,z,w)$;
\item[(ii)] $\Cr_v(x,y,z,w)=\Cr_v(z,w,x,y)$;
\item[(iii)] $\Cr_v(x,y,z,w)=\Cr_v(x,y,z,t)+\Cr_v(x,y,t,w)$;
\item[(iv)] $\Cr_v(x,y,z,w)+\Cr_v(y,z,x,w)+\Cr_v(z,x,y,w)=0$.
\end{enumerate}
The next result shows that $\Cr_v$ is moreover basepoint-independent.

\begin{prop}\label{independent of basepoint}
For every $v\in X$ and every $(x,y,z,w)\in\mscr{A}$, we have
\[\Cr_v(x,y,z,w)=\#\mscr{W}(x,z|y,w)-\#\mscr{W}(x,w|y,z).\]
\end{prop}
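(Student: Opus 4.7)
The strategy is to expand both sides as sums over all hyperplanes and verify that each individual hyperplane contributes identically. Fix a hyperplane $\mf{w}\in\mscr{W}$, let $\mf{h}$ denote the side that does not contain $v$, and let
\[
S_{\mf{w}} := \{x,y,z,w\} \cap \mf{h}
\]
be the subset of the four boundary points lying on the opposite side of $\mf{w}$ from $v$. The basic observation is that $\mf{w}$ contributes $1$ to $(a\cdot b)_v=\#\mscr{W}(v|a,b)$ if and only if $\{a,b\}\cu S_{\mf{w}}$. Hence, formally,
\[
\Cr_v(x,y,z,w)\;=\;\sum_{\mf{w}\in\mscr{W}}\Phi(S_{\mf{w}}),\qquad
\Phi(S):=\mathbbm{1}_{\{x,z\}\cu S}+\mathbbm{1}_{\{y,w\}\cu S}-\mathbbm{1}_{\{x,w\}\cu S}-\mathbbm{1}_{\{y,z\}\cu S}.
\]

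The next step is a short case analysis over the $16$ possible values of $S_{\mf{w}}\cu\{x,y,z,w\}$. One checks that $\Phi(S_{\mf{w}})=0$ except precisely when $S_{\mf{w}}$ is one of the four $2$--element sets $\{x,z\},\{y,w\},\{x,w\},\{y,z\}$; the first two yield $\Phi=+1$ and the last two yield $\Phi=-1$. The empty set, all singletons, both ``parallel'' pairs $\{x,y\},\{z,w\}$, the four triples, and $\{x,y,z,w\}$ all contribute $0$. Moreover, $S_{\mf{w}}\in\{\{x,z\},\{y,w\}\}$ is equivalent to $\mf{w}$ separating $\{x,z\}$ from $\{y,w\}$ in $\overline X$, i.e.\ to $\mf{w}\in\mscr{W}(x,z|y,w)$; crucially, this condition is independent of on which side of $\mf{w}$ the basepoint $v$ lies. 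Similarly, $S_{\mf{w}}\in\{\{x,w\},\{y,z\}\}$ iff $\mf{w}\in\mscr{W}(x,w|y,z)$. Summing the contributions gives the claimed formula, and as a bonus this proves basepoint-independence since the right-hand side no longer mentions $v$.

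The main delicate point is handling infinities, since the defining hypothesis $(x,y,z,w)\in\mscr{A}$ permits one of $(x\cdot z)_v+(y\cdot w)_v$ or $(x\cdot w)_v+(y\cdot z)_v$ to be $+\infty$. However, the hyperplane decomposition yields the bounds
\[
\#\mscr{W}(x,z|y,w)\;\leq\;(x\cdot z)_v+(y\cdot w)_v,\qquad \#\mscr{W}(x,w|y,z)\;\leq\;(x\cdot w)_v+(y\cdot z)_v,
\]
so at most one of the two cardinalities on the right-hand side is infinite, and the asserted identity makes sense in $\Z\cup\{\pm\infty\}$. In the all-finite case one simply rearranges the sum; in the remaining case, the finite side of the identity is matched term-by-term with the finite halves of both Gromov-product sums, and what remains are two infinite positive contributions that cancel via the sign pattern of $\Phi$. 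The only real obstacle is bookkeeping; once the decomposition by cut type $S_{\mf{w}}$ is set up, the verification is entirely mechanical.
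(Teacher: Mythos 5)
Your proof is correct and takes essentially the same approach as the paper: a hyperplane-by-hyperplane comparison of the contributions to both sides, where your exhaustive evaluation of the indicator $\Phi$ on the $16$ subsets of $\{x,y,z,w\}$ is just a tidier way of organizing the paper's case analysis by cut type (and its symmetry reduction). One small quibble with the wording of the last paragraph: there is no actual ``cancellation'' of infinite contributions --- rather, both sides of the identity are simultaneously $+\infty$ (or simultaneously $-\infty$), precisely because, as your bounds show, all but finitely many hyperplanes in the infinite set $\mscr{W}(v|x,z)$ (say) have $S_{\mf{w}}=\{x,z\}$ exactly and hence lie in $\mscr{W}(x,z|y,w)$.
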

\begin{proof}
We show that every hyperplane $\mf{w}\in\mscr{W}$ gives the same contribution to both sides of the equality. Note that
\[\Cr_v(x,y,z,w)=\#\mscr{W}(v|x,z)+\#\mscr{W}(v|y,w)-\#\mscr{W}(v|x,w)-\#\mscr{W}(v|y,z).\]
Every $\mf{w}\in\mscr{W}(x,z|y,w)$ contributes to either $\mscr{W}(v|x,z)$ or $\mscr{W}(v|y,w)$ by $+1$, without affecting $\mscr{W}(v|x,w)$ and $\mscr{W}(v|y,z)$. Similarly, every hyperplane ${\mf{w}\in\mscr{W}(x,w|y,z)}$ decreases $-\mscr{W}(v|x,w)-\mscr{W}(v|y,z)$ by $1$ and leaves $\mscr{W}(v|x,z)$ and $\mscr{W}(v|y,w)$ invariant. Thus, it suffices to check that hyperplanes ${\mf{w}\not\in\mscr{W}(x,z|y,w)\sqcup\mscr{W}(x,w|y,z)}$ do not affect $\Cr_v(x,y,z,w)$. 

This is clear if all four points $x$, $y$, $z$ and $w$ lie on the same side of $\mf{w}$, or if $\mf{w}\in\mscr{W}(x,y|z,w)$. The remaining case is when exactly three of the four points lie on one side of $\mf{w}$. Performing a sequence of moves $(x\leftrightarrow y,z\leftrightarrow w)$ and $(x\leftrightarrow z,y\leftrightarrow w)$, which leave $\Cr_v(x,y,z,w)$ invariant, we reduce to the case when $\mf{w}\in\mscr{W}(x|y,z,w)$. If $v$ is not on the same side of $\mf{w}$ as $x$, the hyperplane $\mf{w}$ does not contribute to any summand of $\Cr_v(x,y,z,w)$. Otherwise ${\mf{w}\in\mscr{W}(x,v|y,z,w)}$; in this case the only contributions to $\Cr_v(x,y,z,w)$ arise from $\mscr{W}(v|y,w)$ and $\mscr{W}(v|y,z)$ and they cancel each other.
\end{proof}

We remark that the right-hand side of the equality in Proposition~\ref{independent of basepoint} is in general defined on a set strictly larger than $\mscr{A}$.

\begin{cor}\label{independent of basepoint 2}
The map $\Cr_v\colon\mscr{A}\ra\Z\cup\{\pm\infty\}$ is independent of the choice of $v$. All automorphisms of $X$ preserve $\Cr_v$.
\end{cor}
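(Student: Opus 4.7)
The plan is to extract both conclusions directly from Proposition \ref{independent of basepoint}, which rewrites $\Cr_v(x,y,z,w)$ as the basepoint-free difference $\#\mscr{W}(x,z|y,w)-\#\mscr{W}(x,w|y,z)$. Independence of the basepoint is then immediate: for any two vertices $v,v'\in X$ and any $(x,y,z,w)\in\mscr{A}$, both $\Cr_v(x,y,z,w)$ and $\Cr_{v'}(x,y,z,w)$ equal the same hyperplane count, hence coincide as elements of $\Z\cup\{\pm\infty\}$.

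For automorphism invariance, I would first observe that any $\phi\in\Aut(X)$ permutes $\mscr{H}$, sending each halfspace $\mf{h}$ to another halfspace $\phi(\mf{h})$. Since the extension of a halfspace to $\overline X$ is characterised purely in terms of ultrafilters (\emph{i.e.}\ $x\in\overline{\mf{h}}\Leftrightarrow\mf{h}\in\s_x$), the natural extension of $\phi$ to $\overline X$ satisfies $\phi(\overline{\mf{h}})=\overline{\phi(\mf{h})}$. Consequently $\phi$ preserves the median operator and, in turn, the interval function $I(\cdot,\cdot)$. By part~(2) of Lemma~\ref{infinite Gromov product}, the admissibility condition defining $\mscr{A}$ is phrased purely through the intervals and whether they meet $X$, so $\phi$ maps $\mscr{A}$ bijectively to itself.

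Now for any $4$--tuple $(x,y,z,w)\in\mscr{A}$, the bijection $\mf{w}\mapsto\phi(\mf{w})$ on $\mscr{W}$ restricts to bijections
\[\mscr{W}(x,z|y,w)\longrightarrow\mscr{W}\bigl(\phi(x),\phi(z)\,\big|\,\phi(y),\phi(w)\bigr),\quad \mscr{W}(x,w|y,z)\longrightarrow\mscr{W}\bigl(\phi(x),\phi(w)\,\big|\,\phi(y),\phi(z)\bigr),\]
simply because $\mf{w}$ separates the two prescribed pairs of boundary points if and only if $\phi(\mf{w})$ separates their images. Taking cardinalities and subtracting, Proposition~\ref{independent of basepoint} yields $\Cr_v(x,y,z,w)=\Cr_v(\phi(x),\phi(y),\phi(z),\phi(w))$, which is the required invariance.

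No real obstacles arise here; the only small subtlety is verifying that $\mscr{A}$ is $\Aut(X)$--invariant, which is handled by the intrinsic (interval-based) characterisation of finiteness of Gromov products in Lemma~\ref{infinite Gromov product}. Both claims are thus formal consequences of the hyperplane formula established in Proposition~\ref{independent of basepoint}.
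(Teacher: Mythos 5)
Your proposal is correct and matches the paper's own reasoning: Corollary~\ref{independent of basepoint 2} is stated in the paper without a separate proof, precisely because both conclusions follow at once from the basepoint-free hyperplane-counting formula in Proposition~\ref{independent of basepoint}. The additional care you take in verifying that $\mscr{A}$ is $\Aut(X)$--invariant (via the interval characterisation in Lemma~\ref{infinite Gromov product}) is a correct and welcome explicit check of a point the paper leaves implicit.
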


\begin{defn}
We will write $\Cr\colon\mscr{A}\ra\Z\cup\{\pm\infty\}$ from now on and refer to it as \emph{the cross ratio} on $\overline X$ (or $\partial X$).
\end{defn}

Identities~(i) and~(ii) imply that $|\Cr(x,y,z,w)|$ is invariant under a subgroup of order $8$ of $\Sym(\{x,y,z,w\})$. Thus, we only need to record $24/8=3$ `meaningful' values for every subset $\{x,y,z,w\}$. These values are precisely the three cross ratios appearing in identity~(iv), so they are not independent. 

The purpose of Definition~\ref{crt defn} below is precisely to record simultaneously all cross ratios obtained by permuting coordinates. We first introduce some notation. Given $a_1,b_1,c_1,a_2,b_2,c_2\in\N\cup\{+\infty\}$, we declare the triples $(a_1,b_1,c_1)$ and $(a_2,b_2,c_2)$ to be \emph{equivalent} if there exists $n\geq 0$ such that 
\[a_i=a_j+n;~~~~b_i=b_j+n;~~~~c_i=c_j+n,\] 
where $\{i,j\}=\{1,2\}$. The equivalence class of the triple $(a,b,c)$ is denoted by $\ll a:b:c\rr$. Note that $\ll+\infty:+\infty:+\infty\rr$ is the only class consisting of a single triple; every other equivalence class has a unique representative with at least one zero entry. We also remark that all triples in a given equivalence class have the same infinite entries.

\begin{defn}\label{crt defn}
Given $x,y,z,w\in\overline X$ and $v\in X$, the \emph{cross ratio triple} $\crt_v(x,y,z,w)$ is the equivalence class
\[\Big\ll(x\cdot y)_v+(z\cdot w)_v:(x\cdot z)_v+(y\cdot w)_v:(x\cdot w)_v+(y\cdot z)_v\Big\rr.\]
\end{defn}

Note that $\crt_v$ is always independent of the choice of $v$. This follows from Corollary~\ref{independent of basepoint 2} when $(x,y,z,w)\in\mscr{A}$ and is clear otherwise. We are therefore allowed to simply write $\crt$.

All entries of a cross ratio triple are nonnegative. The three cross ratios in identity~(iv) above are recovered by taking the difference of two entries of the triple. We will employ asterisks $\ast$ when we do not want to specify a coordinate of $\crt(x,y,z,w)$. For instance, we write $\crt(x,y,z,w)=\ll\ast:0:1\rr$ rather than $\crt(x,y,z,w)=\ll a:0:1\rr$ and $a\in\N\cup\{+\infty\}$.

Let now $Y$ be another $\CAT$ cube complex. We write $\mscr{A}(X)$, rather than just $\mscr{A}$, when it is necessary to specify the cube complex under consideration. The following makes the notion of \emph{``cross-ratio preserving''} map more precise. 

\begin{defn}
A map $f\colon\partial X\ra\partial Y$ is \emph{M\"obius} if, for all $x,y,z,w\in\partial X$ with $(x,y,z,w)\in\mscr{A}(X)$, we have $(f(x),f(y),f(z),f(w))\in\mscr{A}(Y)$ and 
\[\Cr(f(x),f(y),f(z),f(w))=\Cr(x,y,z,w).\] 
The latter happens if and only if $\crt(f(x),f(y),f(z),f(w))=\crt(x,y,z,w)$ for all $(x,y,z,w)\in\mscr{A}(X)$.
\end{defn}

We remark that a bijection $f\colon\partial X\ra\partial Y$ is M\"obius if and only if its inverse $f^{-1}\colon\partial Y\ra\partial X$ is.

\section{M\"obius bijections between Roller boundaries.}

This section is devoted to the proof of the Main Theorem. Throughout it, let $X$ and $Y$ be $\CAT$ cube complexes with no extremal vertices. We moreover consider a M\"obius bijection $f\colon\partial X\ra\partial Y$.

To avoid cumbersome formulas, we will employ the following notation for $x,y,z,w\in\partial X$ and $v\in Y$:
\[(x\cdot y)_v^f=(f(x)\cdot f(y))_v, \hspace{.5cm} m^f(x,y,z):=m(f(x),f(y),f(z)),\]
\[\crt^f(x,y,z,w)=\crt(f(x),f(y),f(z),f(w)).\]

\subsection{Opposite points.}\label{dist pres sect}

As described in the introduction, the following notion will be crucial to avoid the issues depicted in Figure~\ref{spanning a cube}.

\begin{defn}[Definition~5.2 in \cite{BF1}]\label{opposite defn}
Given $x,y,z\in\overline X$, we say that $x$ and $y$ are \emph{opposite} with respect to $z$ (written $x\op_z y$) if the median ${m=m(x,y,z)}$ lies in $X$ and $I(x,y)=I(x,m)\cup I(m,y)$.
\end{defn}

We will also write $x\op_z^fy$ with the meaning of $f(x)\op_{f(z)}f(y)$.

\begin{rmk}
Let $x,y,z\in\overline X$ be points with $m=m(x,y,z)\in X$; denote by $\mscr{W}_m\cu\mscr{W}(X)$ the subset of hyperplanes adjacent to $m$. Lemma~5.1 in \cite{BF1} shows that we have $x\op_zy$ if and only if no element of $\mscr{W}(m|x)\cap\mscr{W}_m$ is transverse to an element of $\mscr{W}(m|y)\cap\mscr{W}_m$.
\end{rmk}

\begin{rmk}\label{one is zero}
Consider points $x,y,z\in\overline X$ with $x\op_z y$ and $m=m(x,y,z)$. Given any $w\in\partial X$, we either have $(x\cdot w)_m=0$ or $(y\cdot w)_m=0$. Indeed, the gate-projection $m(x,y,w)$ falls either in $I(y,m)$ or in $I(x,m)$.
\end{rmk}

Our goal is now to show that the property in Definition~\ref{opposite defn} is preserved by the M\"obius bijection $f$. We will rely on the following analogue of Proposition~5.4 in \cite{BF1}.

\begin{lem}\label{op vs lcrt}
Given points $x_1,x_2,y\in\partial X$ with $m(x_1,x_2,y)\in X$, the condition $x_1\op_y x_2$ fails if and only if there exists a point $z\in\partial X$ such that ${\crt(x_1,x_2,y,z)=\ll a:b:c\rr}$ and ${a<\min\{b,c\}<+\infty}$.
\end{lem}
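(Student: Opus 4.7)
I will use $v=m:=m(x_1,x_2,y)\in X$ as basepoint. Since $m$ lies in $I(x_1,x_2)\cap I(x_1,y)\cap I(x_2,y)$, the three Gromov products $(x_1\cdot x_2)_m$, $(x_1\cdot y)_m$, $(x_2\cdot y)_m$ all vanish. Hence
\[ \crt(x_1,x_2,y,z) \;=\; \ll (y\cdot z)_m : (x_2\cdot z)_m : (x_1\cdot z)_m \rr, \]
and the condition $a<\min\{b,c\}<+\infty$ reads $(y\cdot z)_m < (x_i\cdot z)_m$ for $i=1,2$ together with $\min\{(x_1\cdot z)_m,(x_2\cdot z)_m\}<\infty$. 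The median property of $m$ also makes $\mscr{H}(m|x_1,z)$, $\mscr{H}(m|x_2,z)$ and $\mscr{H}(m|y,z)$ pairwise disjoint, since any common hyperplane would land in some empty set $\mscr{H}(m|x_i,y)$ or $\mscr{H}(m|x_1,x_2)$.

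\textbf{Easy direction.} Given such a $z$, $(x_i\cdot z)_m>0$ forces $\mscr{H}(m|x_i,z)\neq\emptyset$; I pick $\mf{w}_i$ in this set at minimal distance from $m$, which is necessarily adjacent to $m$ by a standard closest-hyperplane argument. Disjointness of $\mscr{H}(m|x_1),\mscr{H}(m|x_2)$ (from $m\in I(x_1,x_2)$) gives $\mf{w}_1\neq\mf{w}_2$. Both closures $\overline{\mf{w}_1},\overline{\mf{w}_2}$ contain $z$ in $\overline{X}$; since closures of disjoint halfspaces are disjoint in $\overline{X}$, $\mf{w}_1$ and $\mf{w}_2$ must be transverse. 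By the remark following Definition~\ref{opposite defn}, $x_1\op_y x_2$ fails.

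\textbf{Hard direction.} Given transverse adjacent-to-$m$ hyperplanes $\mf{w}_i\in\mscr{H}(m|x_i)$, the median characterization of $m$ forces $y\in\mf{w}_1^*\cap\mf{w}_2^*$, $x_1\in\mf{w}_1\cap\mf{w}_2^*$ and $x_2\in\mf{w}_1^*\cap\mf{w}_2$. Let $e_i$ be the edge at $m$ dual to $\mf{w}_i$ and set $m':=m+e_1+e_2\in\mf{w}_1\cap\mf{w}_2$. I introduce the convex quadrant
\[ Q := \mf{w}_1\cap\mf{w}_2\cap\bigcap_{\mf{h}\in\mscr{H}(m|y)}\mf{h}^*, \]
which contains $m'$. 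Any $z\in Q\cap\partial X$ automatically satisfies $\s_z\cap\mscr{H}(m|y)=\emptyset$, hence $(y\cdot z)_m=0$, and $\mf{w}_1,\mf{w}_2\in\s_z$ gives $(x_i\cdot z)_m\geq 1$; it then remains to find such a $z$ with at least one $(x_i\cdot z)_m$ finite. My candidate is the endpoint at infinity of a suitably chosen straight ray from $m'$ staying in $Q$. Existence of a ray in $Q$ uses the no-extremal-vertex hypothesis at $m'$: by a link-based analysis of $\lk(m')$, if every ray from $m'$ eventually exited $Q$ via a hyperplane in $\{\mf{w}_1,\mf{w}_2\}\cup\{\mf{w}(\mf{h}):\mf{h}\in\mscr{H}(m|y)\}$ then $\lk(m')$ would be forced to be a cone, contradicting non-extremality; Lemma~\ref{straight rays} then furnishes a straight ray. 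Choosing the direction of this ray so that its initial edge has hyperplane outside $\mscr{H}(m|x_1)\cup\mscr{H}(m|x_2)$ (which is possible by the same non-cone argument) ensures, via straightness, that $\mscr{W}(r)\cap\mscr{H}(m|x_i)$ is finite for each $i$; this forces $(x_i\cdot z)_m<\infty$.

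\textbf{Main obstacle.} The delicate part is the construction of the ray in $Q$ with the finiteness property using \emph{only} the no-extremal-vertex hypothesis. In finite-dimensional or cocompact settings this is straightforward, but here one must argue purely locally at $m'$ since $X$ may be infinite dimensional, locally infinite, and devoid of symmetries; the key input is translating the structural condition ``$\lk(m')$ is not a cone'' into the existence of a direction avoiding the prescribed ``bad'' collection of hyperplanes, and then invoking Lemma~\ref{straight rays} to extend indefinitely.
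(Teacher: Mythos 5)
Your easy direction is fine: picking hyperplanes of $\mscr{W}(m|x_i,z)$ closest to $m$, noting they are adjacent to $m$ by the usual closest-hyperplane argument, and observing that the four halfspace intersections are all nonempty (using $x_1$, $x_2$, $m$ and $z$ as witnesses) shows the two hyperplanes are transverse, contradicting the remark after Definition~\ref{opposite defn}. This is a direct version of the paper's contrapositive via Remark~\ref{one is zero}, and either works.

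The hard direction, however, has a genuine gap, which you partly flag yourself. Two specific problems. First, the existence of a straight ray from $m'$ staying in the convex set $Q$ is asserted via a ``link-based analysis of $\lk(m')$''; but whether all rays from $m'$ exit $Q$ is a global question about infinitely many hyperplanes in $\mscr{H}(m|y)$, and non-extremality of $m'$ is a purely local condition on $\lk(m')$. There is no implication from ``every ray from $m'$ exits $Q$'' to ``$\lk(m')$ is a cone'', so this step is circular. Second, and more decisively, your finiteness mechanism cannot be implemented even in the simplest cases: you ask for an edge at $m'$ whose hyperplane lies outside $\mscr{W}(m|x_1)\cup\mscr{W}(m|x_2)\cup\mscr{W}(m|y)$ and outside $\{\mf{w}_1,\mf{w}_2\}$. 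In $X=\R^2$ with $m=(0,0)$, $m'=(1,1)$, $x_1=(+\infty,0)$, $x_2=(0,+\infty)$, $y=(-\infty,-\infty)$, the four edges at $m'$ have hyperplanes $\mf{w}_1=\{x=\tfrac12\}$, $\mf{w}_2=\{y=\tfrac12\}$, $\{x=\tfrac32\}\in\mscr{W}(m|x_1)$ and $\{y=\tfrac32\}\in\mscr{W}(m|x_2)$, so no such edge exists, yet the lemma certainly holds here.

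The paper's trick, which you are missing, is to start the ray not at $m'$ but one step earlier: let $r(0)$ be the vertex with $\mscr{W}(m|r(0))=\{\mf{w}_2\}$ (the $\mf{w}_1$-neighbour of $m'$), let the first edge of $r$ cross $\mf{w}_1$ (so $r(1)=m'$), and extend to a straight ray using Lemma~\ref{straight rays}. Since $r(0)$, $y$ and $x_2$ all lie on the same side of $\mf{w}_1$, and since $\mf{w}_1$ is the only hyperplane of the straight ray $r$ adjacent to $r(0)$, Lemma~\ref{straight rays 2} applied at $r(0)$ shows that the concatenations of $r$ with the geodesics from $r(0)$ to $y$ and to $x_2$ (both through $m$) are geodesics. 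This gives $(y\cdot z)_m=0$ and $(x_2\cdot z)_m=1$ outright (so the ray automatically avoids $\mscr{W}(m|y)$ and $\mscr{W}(m|x_2)$, and after step one stays in your $Q$), while $\mf{w}_1$ forces $(x_1\cdot z)_m\geq 1$; note $(x_1\cdot z)_m$ may well be infinite, which is fine. No choice of initial direction avoiding $\mscr{W}(m|x_1)\cup\mscr{W}(m|x_2)$ is needed --- indeed the initial edge crosses $\mf{w}_1\in\mscr{W}(m|x_1)$.
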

\begin{proof}
Setting $m=m(x_1,x_2,y)$, we have:
\[\crt_m(x_1,x_2,y,z)=\big\ll (y\cdot z)_m:(x_2\cdot z)_m:(x_1\cdot z)_m\big\rr.\] 
If $x_1\op_y x_2$ and $z\in\partial X$, Remark~\ref{one is zero} yields $\min\{(x_2 \cdot z)_m,(x_1 \cdot z)_m\}=0$ and we cannot have $(y\cdot z)_m<0$.

Consider instead the case when $x_1$ and $x_2$ are not opposite with respect to $y$. There exist transverse hyperplanes $\mf{w}_i\in\mscr{W}(m|x_i)$ adjacent to $m$. Denote by $m'\in X$ the vertex with ${\mscr{W}(m|m')=\{\mf{w}_1,\mf{w}_2\}}$. By Lemma~\ref{straight rays}, there exists a straight ray $r$ such that $r(1)=m'$ and $\mscr{W}(r(0)|r(1))=\{\mf{w}_1\}$; we set $z=r^+$. Let $\g$ and $\g_2$ be rays based at $r(0)$ satisfying $\g(1)=m$, $\g^+=y$ and $\g_2^+=x_2$. As $r(0)$, $y$ and $x_2$ are all on the same side of $\mf{w}_1$, Lemma~\ref{straight rays 2} implies that the unions $\g\cup r$ and $\g_2\cup r$ are lines. Hence $(y\cdot z)_m=0$ and $(x_2\cdot z)_m=1$. Since ${\mf{w}_1\in\mscr{W}(m|x_1,z)}$, we also have $(x_1\cdot z)_m\geq 1$. We conclude that $\crt(x_1,x_2,y,z)=\ll 0:1:c\rr$ with $c\geq 1$.
\end{proof}

\begin{prop}\label{invariance of op}
Given $x,y,z\in\partial X$, we have $x\op_z y$ if and only if $x\op_z^f y$.
\end{prop}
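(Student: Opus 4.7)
The plan is to prove only the forward implication $x\op_z y\Ra x\op_z^f y$; since $f^{-1}$ is also M\"obius (being a bijective M\"obius map), the converse follows by applying the same argument to it. So assume $x\op_z y$, so that in particular $v:=m(x,y,z)\in X$. The key sub-claim, which is also the main obstacle, is that $m^f(x,y,z)\in Y$.

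Granting this sub-claim, the proof concludes by a contradiction argument via Lemma~\ref{op vs lcrt}. If $x\op_z^f y$ failed, that lemma applied in $Y$ would yield $w'\in\partial Y$ with $\crt(f(x),f(y),f(z),w')=\ll a:b:c\rr$ satisfying $a<\min\{b,c\}<+\infty$. By bijectivity, $w'=f(w)$ for some $w\in\partial X$. Since all three entries of this triple are finite, $(f(x),f(y),f(z),f(w))\in\mscr{A}(Y)$, which via the M\"obius property applied to $f^{-1}$ forces $(x,y,z,w)\in\mscr{A}(X)$ and $\crt(x,y,z,w)=\ll a:b:c\rr$. Since $m(x,y,z)\in X$, Lemma~\ref{op vs lcrt} applied in $X$ then says $x\op_z y$ must fail, contradicting our assumption.

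Everything thus reduces to showing $m^f(x,y,z)\in Y$, which I would do by exhibiting a boundary point $w\in\partial X$ such that $\crt(x,y,z,w)$ has all three entries finite. The M\"obius property transports this finiteness to $\crt^f(x,y,z,w)$, forcing each individual Gromov product $(f(a)\cdot f(b))_{v'}$ with $a,b\in\{x,y,z\}$ to be finite (as a summand of a finite entry), so Lemma~\ref{infinite Gromov product} yields $m^f(x,y,z)\in Y$. To construct $w$, I would use Lemma~\ref{straight rays} --- this is exactly where no-extremal-vertices is crucial --- to extend some edge at $v$ to a straight line and take $w$ as the endpoint of the half-line through $v$. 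The clean case is when one can pick the initial edge $e$ so that $\mf{w}(e)$ avoids all adjacent-to-$v$ hyperplanes of $r_x,r_y,r_z$ (three pairwise-disjoint finite sets, by definition of the median): Lemma~\ref{straight rays 2} then makes each union $r\cup r_a$ a line, giving $(x\cdot w)_v=(y\cdot w)_v=(z\cdot w)_v=0$. In the degenerate case where every edge at $v$ already carries one of those hyperplanes (e.g.\ a trivalent tree vertex), one instead extends an initial edge of, say, $r_x$, and uses absence of extremal vertices further along the ray to arrange that $r$ diverges from $r_x$ after finitely many steps, keeping all three intersections finite.
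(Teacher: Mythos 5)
Your overall strategy matches the paper's: reduce to one implication by symmetry, first show the sub-claim $m^f(x,y,z)\in Y$, and then derive a contradiction from Lemma~\ref{op vs lcrt} by pulling a bad witness $w'=f(w)$ back to $X$ via the M\"obius property. That second part is correct (one small imprecision: $a<\min\{b,c\}<+\infty$ does not force all three of $a,b,c$ to be finite --- $\max\{b,c\}$ may be $+\infty$ --- but it does force at most one to be infinite, which is exactly what membership in $\mscr{A}$ requires, so the conclusion stands).

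The gap is in your proof of the sub-claim. You want to produce a single $w\in\partial X$ so that $\crt(x,y,z,w)$ has all three entries finite; equivalently, so that $(x\cdot w)_v$, $(y\cdot w)_v$, $(z\cdot w)_v$ are all finite. Such a $w$ \emph{need not exist}, and ``no extremal vertices'' is not enough to manufacture one: the obstruction that kills your ``degenerate case'' fix is skinny (degree-$2$) vertices, which are not extremal. Concretely, let $X$ be the tripod, i.e.\ the tree with a single trivalent vertex $p$ and three infinite skinny legs. It has no extremal vertices and $X\not\simeq\R$, and $\partial X=\{z_1,z_2,z_3\}$ consists of exactly the three ends. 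Taking $v=p$ and $(x,y,z)=(z_1,z_2,z_3)$, one has $m(x,y,z)=p$ and $x\op_z y$. But every candidate $w$ is one of $z_1,z_2,z_3$, and in each case $(w\cdot w)_p=+\infty$ makes one of the three Gromov products infinite. Moreover your proposed fix --- extend an initial edge of $r_x$ and use absence of extremal vertices to diverge --- fails here because the legs are made of skinny vertices and the only straight extension of $e_x$ is $r_x$ itself, so $(x\cdot w)_v=+\infty$.

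The paper avoids this entirely by not asking for any auxiliary point. Since $m(x,y,z)\in X$, Lemma~\ref{infinite Gromov product} gives that $(x\cdot y)_v$, $(y\cdot z)_v$, $(z\cdot x)_v$ are finite, so the degenerate $4$--tuples $(x,x,y,y)$, $(y,y,z,z)$, $(z,z,x,x)$ lie in $\mscr{A}(X)$ (in each, exactly one of the three sums is infinite). The M\"obius condition --- which places no distinctness restriction on the entries --- then forces $(f(x),f(x),f(y),f(y))\in\mscr{A}(Y)$, i.e.\ $(f(x)\cdot f(y))_{v'}<+\infty$, and similarly for the other two pairs, which is exactly $m^f(x,y,z)\in Y$ by Lemma~\ref{infinite Gromov product} again. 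This is the step you are missing: exploit repeated entries in $4$--tuples to test finiteness of Gromov products pairwise rather than via a fourth point.
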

\begin{proof}
Fix a basepoint $v\in X$. Assume that $x\op_zy$; in particular, we have $m(x,y,z)\in X$. By Lemma~\ref{infinite Gromov product}, the latter is equivalent to the Gromov products $(x\cdot y)_v$, $(y\cdot z)_v$ and $(z\cdot x)_v$ being all finite. In other words, the $4$--tuples $(x,x,y,y)$, $(y,y,z,z)$ and $(z,z,x,x)$ all lie in $\mscr{A}(X)$. As $f$ is M\"obius, it takes these $4$--tuples into $\mscr{A}(Y)$ and we must have $m^f(x,y,z)\in Y$.

Now, if we did not have $x\op_z^fy$, Lemma~\ref{op vs lcrt} would yield $w\in\partial Y$ with $\crt(f(x),f(y),f(z),w)=\ll a:b:c\rr$ and $a<\min\{b,c\}<+\infty$. In particular $(f(x),f(y),f(z),w)\in\mscr{A}(X)$ and hence $\crt(x,y,z,f^{-1}(w))=\ll a:b:c\rr$, contradicting Lemma~\ref{op vs lcrt}. Thus $x\op_z y\Ra x\op_z^f y$ and the converse implication follows by considering $f^{-1}\colon B\ra A$.
\end{proof}

We can use triples of opposite points to obtain a well-defined map $X\ra Y$. We now describe this procedure, culminating in Corollary~\ref{distance is preserved} below.

The next three results also appear in \cite{BF1} as Lemmas~5.21,~5.22 and Proposition~5.23. We include them here along with their proofs for the convenience of the reader, but also because the standing assumptions of \cite{BF1} are much stronger than the current ones. 

Given points $x_1,x_2,x,y_1,y_2,y\in\partial X$ with $x_1\op_x x_2$ and $y_1\op_y y_2$, we set:
\begin{align*} 
&m_x=m(x_1,x_2,x), & &m_y=m(y_1,y_2,y), \\ 
&m_x'=m^f(x_1,x_2,x), & &m_y'=m^f(y_1,y_2,y).
\end{align*}

\begin{lem}\label{order is preserved}
Given $u\in\partial X$ with $(x_1,x_2,x,u)\in\mscr{A}(X)$, we have:
\[(x_1\cdot u)_{m_x}=(x_1\cdot u)^f_{m_x'}, \hspace{.5cm} (x_2\cdot u)_{m_x}=(x_2\cdot u)^f_{m_x'}, \hspace{.5cm} (x\cdot u)_{m_x}=(x\cdot u)^f_{m_x'}.\]
\end{lem}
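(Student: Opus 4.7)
\medskip

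My plan is to compute the cross-ratio triple $\crt(x_1,x_2,x,u)$ from two different basepoints --- namely $m_x\in X$ and $m_x'\in Y$ --- and then exploit the uniqueness of the ``zero-containing'' representative of each equivalence class.

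First, since $m_x=m(x_1,x_2,x)$, the point $m_x$ lies on each of the three intervals $I(x_1,x_2)$, $I(x_1,x)$, $I(x_2,x)$; in particular the three pairwise Gromov products $(x_1\cdot x_2)_{m_x}$, $(x_1\cdot x)_{m_x}$ and $(x_2\cdot x)_{m_x}$ all vanish. Substituting into Definition~\ref{crt defn}, this yields
\[
\crt(x_1,x_2,x,u)=\big\ll (x\cdot u)_{m_x}\,:\,(x_2\cdot u)_{m_x}\,:\,(x_1\cdot u)_{m_x}\big\rr.
\]
By the analogous computation in $Y$, applied to the median $m_x'=m^f(x_1,x_2,x)$ (which is a vertex of $Y$ by Proposition~\ref{invariance of op}), one gets
\[
\crt^f(x_1,x_2,x,u)=\big\ll (x\cdot u)^f_{m_x'}\,:\,(x_2\cdot u)^f_{m_x'}\,:\,(x_1\cdot u)^f_{m_x'}\big\rr.
\]
Since $f$ is M\"obius and $(x_1,x_2,x,u)\in\mscr{A}(X)$, the two triples agree as equivalence classes.

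Next I would argue that each of these two triples is the \emph{canonical} representative of its equivalence class, i.e.\ contains a zero entry. On the $X$ side, the hypothesis $x_1\op_x x_2$ together with Remark~\ref{one is zero} gives $\min\{(x_1\cdot u)_{m_x},(x_2\cdot u)_{m_x}\}=0$. On the $Y$ side, Proposition~\ref{invariance of op} grants $x_1\op_x^f x_2$, so Remark~\ref{one is zero} applied in $Y$ similarly forces $\min\{(x_1\cdot u)^f_{m_x'},(x_2\cdot u)^f_{m_x'}\}=0$.

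Finally, recall from the discussion preceding Definition~\ref{crt defn} that any equivalence class other than $\ll+\infty:+\infty:+\infty\rr$ admits a unique representative with at least one zero entry, and that infinite entries are preserved within a class. Since the triples we computed are not all-infinite (their last two entries contain a zero) and each carries a zero entry, both triples are the unique zero-representative of the common equivalence class, hence agree coordinate by coordinate. This yields the three stated equalities simultaneously.

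The one delicate point --- and the main obstacle the argument must overcome --- is precisely the step connecting equality of equivalence classes to entrywise equality. Without the opposite hypothesis, the two ``zero positions'' on the $X$ side and the $Y$ side could fail to match, or neither triple might possess a zero entry, and then the two representatives could differ by a common translation (reflecting the nuisance illustrated in Figure~\ref{spanning a cube}). The role of $x_1\op_x x_2$, transported to $Y$ via Proposition~\ref{invariance of op}, is exactly to pin down both triples to the canonical zero-containing representative.
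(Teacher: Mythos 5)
Your proposal is correct and follows essentially the same approach as the paper: compute $\crt$ at the basepoints $m_x$ and $m_x'$, invoke Remark~\ref{one is zero} together with Proposition~\ref{invariance of op} to force a zero entry in the last two coordinates on both sides, and conclude entrywise equality from the equality of cross-ratio triples. Your closing paragraph just makes explicit the "unique zero-containing representative" argument that the paper leaves implicit.
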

\begin{proof}
Observe that:
\[\crt_{m_x}(x_1,x_2,x,u)=\ll (x\cdot u)_{m_x}:(x_2\cdot u)_{m_x}:(x_1\cdot u)_{m_x}\rr,\]
\[\crt^f_{m_x'}(x_1,x_2,x,u)=\ll (x\cdot u)^f_{m_x'}:(x_2\cdot u)^f_{m_x'}:(x_1\cdot u)^f_{m_x'}\rr.\]
Since $x_1\op_x x_2$, Remark~\ref{one is zero} shows that either $(x_1\cdot u)_{m_x}=0$ or $(x_2\cdot u)_{m_x}=0$. Since $x_1\op_x^f x_2$ by Proposition~\ref{invariance of op}, also one among $(x_1\cdot u)^f_{m_x'}$ and $(x_2\cdot u)^f_{m_x'}$ must vanish. The equality $\crt(x_1,x_2,x,u)=\crt^f(x_1,x_2,x,u)$ then implies that $(x_1\cdot u)_{m_x}=(x_1\cdot u)^f_{m_x'}$, $(x_2\cdot u)_{m_x}=(x_2\cdot u)^f_{m_x'}$ and $(x\cdot u)_{m_x}=(x\cdot u)^f_{m_x'}$.
\end{proof}

\begin{lem}\label{more preserved products}
Let $u,v\in\partial X$ be two points such that the $4$--tuples $(x_1,x_2,u,v)$, $(x_1,x_2,x,u)$ and $(x_1,x_2,x,v)$ all lie in $\mscr{A}(X)$. Then $(u\cdot v)_{m_x}=(u\cdot v)^f_{m_x'}$.
\end{lem}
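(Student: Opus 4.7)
The strategy is to read $(u\cdot v)_{m_x}$ off the cross-ratio triple $\crt(u,v,x_1,x_2)$, which the M\"obius property of $f$ controls directly. Because $x_1\op_x x_2$, the median $m_x$ lies on $I(x_1,x_2)$, so $(x_1\cdot x_2)_{m_x}=0$; Proposition~\ref{invariance of op} gives $x_1\op_x^f x_2$ and hence $(x_1\cdot x_2)^f_{m_x'}=0$ as well. Expanding Definition~\ref{crt defn} at the basepoint $m_x$ therefore yields
\[\crt(u,v,x_1,x_2)=\ll (u\cdot v)_{m_x}\,:\,(u\cdot x_1)_{m_x}+(v\cdot x_2)_{m_x}\,:\,(u\cdot x_2)_{m_x}+(v\cdot x_1)_{m_x}\rr,\]
and analogously for $\crt^f(u,v,x_1,x_2)$ using the basepoint $m_x'$.

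Next I would invoke Lemma~\ref{order is preserved} twice: once applied to $u$ using the $4$--tuple $(x_1,x_2,x,u)\in\mscr{A}(X)$, and once to $v$ using $(x_1,x_2,x,v)\in\mscr{A}(X)$. This shows that each of the four Gromov products $(u\cdot x_1)_{m_x}$, $(u\cdot x_2)_{m_x}$, $(v\cdot x_1)_{m_x}$, $(v\cdot x_2)_{m_x}$ equals its $f$--counterpart at $m_x'$. Consequently the second and third entries of the two triples match.

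Since $f$ is M\"obius and $(u,v,x_1,x_2)\in\mscr{A}(X)$, the equivalence classes $\crt(u,v,x_1,x_2)$ and $\crt^f(u,v,x_1,x_2)$ coincide. To upgrade equality of equivalence classes into equality of the first entries, I use that membership in $\mscr{A}$ forces at most one entry of $\crt(u,v,x_1,x_2)$ to be infinite: hence at least one of the (common) second or third entries is a finite $A\in\N$. The defining relation of the equivalence classes requires a shift $n\geq 0$ with $A=A+n$, which forces $n=0$, so the two triples agree coordinate-by-coordinate. In particular $(u\cdot v)_{m_x}=(u\cdot v)^f_{m_x'}$, as desired.

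The computational work is routine cross-ratio bookkeeping; the only real subtlety, and the reason the hypotheses include three distinct memberships in $\mscr{A}(X)$, is ensuring that infinite Gromov products do not sabotage the passage from equality of equivalence classes to equality of the relevant coordinate. The condition $(x_1,x_2,u,v)\in\mscr{A}(X)$ is exactly what guarantees this.
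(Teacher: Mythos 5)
Your proof is correct and follows the same route as the paper: compute $\crt(x_1,x_2,u,v)$ at the basepoints $m_x$ and $m_x'$, use $(x_1\cdot x_2)_{m_x}=0$ to read off $(u\cdot v)_{m_x}$ in the first coordinate, invoke Lemma~\ref{order is preserved} twice (for $u$ and for $v$) to match the other two coordinates, and conclude. You usefully make explicit a step the paper leaves implicit, namely that $(x_1,x_2,u,v)\in\mscr{A}(X)$ is exactly what guarantees that at least one of the matched entries is finite, which forces the shift $n$ in the equivalence relation to vanish and upgrades equality of equivalence classes to equality of the first coordinates.
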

\begin{proof}
We have $\crt(x_1,x_2,u,v)=\crt^f(x_1,x_2,u,v)$. Equating the cross ratio triples $\crt_{m_x}(x_1,x_2,u,v)$ and $\crt_{m_x'}^f(x_1,x_2,u,v)$, we obtain
\[\ll(u\cdot v)_{m_x}: b:c\rr=\ll (u\cdot v)^f_{m_x'}: b':c'\rr,\]
where Lemma~\ref{order is preserved} yields $b=b'$ and $c=c'$. Hence $(u\cdot v)_{m_x}=(u\cdot v)^f_{m_x'}$.
\end{proof}

\begin{prop}\label{d(mx,my)}
We have:
\[d(m_x,m_y)=(y_1\cdot y_2)_{m_x}+\left|(y_1\cdot y)_{m_x}-(y_2\cdot y)_{m_x}\right|.\]
\end{prop}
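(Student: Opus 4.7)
The plan is to decompose $d(m_x, m_y)$ by gate-projecting $m_x$ onto the interval $I(y_1, y_2)$. Set $q := m(m_x, y_1, y_2)$; by the description of the median as a gate-projection recalled in Section~2, we have $d(m_x, q) = (y_1 \cdot y_2)_{m_x}$, and since $m_y \in I(y_1, y_2)$, the projection property yields $q \in I(m_x, m_y)$, so
\[d(m_x, m_y) = (y_1 \cdot y_2)_{m_x} + d(q, m_y).\]
The remaining task is to show that $d(q, m_y) = |(y_1 \cdot y)_{m_x} - (y_2 \cdot y)_{m_x}|$.

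For this, I would introduce the hyperplane counts
\[b := \#\mscr{W}(m_x, y_2|y_1, y), \hspace{.5cm} c := \#\mscr{W}(m_x, y_1|y_2, y),\]
and split $\mscr{W}(m_x|y_1, y)$ and $\mscr{W}(m_x|y_2, y)$ according to the side of the omitted point among $y_1, y_2$. Both sets share exactly $\mscr{W}(m_x|y_1, y_2, y)$, so subtracting cardinalities gives
\[(y_1 \cdot y)_{m_x} - (y_2 \cdot y)_{m_x} = b - c.\]
It thus suffices to prove $d(q, m_y) = |b - c|$.

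This is where the opposite assumption enters. By $y_1 \op_y y_2$, we have $I(y_1, y_2) = I(y_1, m_y) \cup I(m_y, y_2)$, and since $q \in I(y_1, y_2)$, I may assume $q \in I(y_1, m_y)$ (the other case is symmetric after swapping $y_1 \leftrightarrow y_2$ and $b \leftrightarrow c$). A majority argument using $q = m(m_x, y_1, y_2)$ and $m_y = m(y_1, y_2, y)$ shows that any $b$-hyperplane would place $q$ on the $m_x$-side and both $y_1$ and $m_y$ on the opposite side, contradicting $q \in I(y_1, m_y)$; hence $b = 0$. The same majority argument shows that every $c$-hyperplane separates $q$ from $m_y$, and conversely any hyperplane separating $q$ from $m_y$ must be a $c$-hyperplane: $q \in I(y_1, m_y)$ forces $y_1$ onto $q$'s side, then the median $m_y = m(y_1, y_2, y)$ places $y_2, y$ on $m_y$'s side, and the median $q = m(m_x, y_1, y_2)$ places $m_x$ on $q$'s side. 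Therefore $d(q, m_y) = c = |b - c|$.

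The main obstacle is this last step: one must leverage the full interval decomposition from the opposite hypothesis, not merely the statement about hyperplanes adjacent to $m_y$, in order to rule out $b$-hyperplanes. Once $b = 0$ is established, identifying $d(q, m_y)$ with $c$ reduces to routine median-algebra bookkeeping.
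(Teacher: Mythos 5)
Your proof is correct and follows essentially the same route as the paper: decompose $d(m_x,m_y)$ via the gate-projection $q=m(m_x,y_1,y_2)$, use the interval decomposition $I(y_1,y_2)=I(y_1,m_y)\cup I(m_y,y_2)$ from $y_1\op_y y_2$ to place $q$ in one half, and then count hyperplanes. The paper makes the symmetric choice ($v\in I(m_y,y_2)$) and phrases the counting directly in terms of Gromov products rather than naming the quantities $b$ and $c$, but the underlying argument --- one of the two ``mixed'' hyperplane sets is empty, the other equals $\mscr{W}(q\,|\,m_y)$ --- is identical.
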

\begin{proof}
Set $v=m(y_1,y_2,m_x)$. As $v$ is the gate-projection of $m_x$ to the interval $I(y_1,y_2)$, we have:
\[d(m_x,m_y)=d(m_x,v)+d(v,m_y),\]
where $d(m_x,v)=(y_1\cdot y_2)_{m_x}$. Up to exchanging $y_1$ and $y_2$, we can assume that $v$ lies within $I(m_y,y_2)$. Since no element of $\mscr{W}(v|y_2)=\mscr{W}(m_x,y_1|y_2)$ separates $m_x$ and $y$, it follows that the set $\mscr{W}(m_x,y_1|y_2,y)$ is empty. We conclude that $(y_2\cdot y)_{m_x}=\#\mscr{W}(m_x|y_1,y_2,y)$. On the other hand, observing that $\mscr{W}(v|m_y)=\mscr{W}(m_x,y_2|y_1,y)$, we have 
\[\mscr{W}(m_x|y_1,y)=\mscr{W}(m_x|y_1,y_2,y)\sqcup\mscr{W}(v|m_y)\]
and $(y_1\cdot y)_{m_x}=(y_2\cdot y)_{m_x}+d(v,m_y)$. 
\end{proof}

Lemma~\ref{more preserved products} and Proposition~\ref{d(mx,my)} immediately yield the following.

\begin{cor}\label{distance is preserved} 
Suppose that $(x_1,x_2,u,v)$ and $(x_1,x_2,x,u)$ lie in $\mscr{A}(X)$ whenever $u$ and $v$ are distinct elements of the set $\{y_1,y_2,y\}$. Then, we have $d(m_x,m_y)=d(m_x',m_y')$. In particular, $m_x'$ and $m_y'$ coincide if and only if $m_x$ and $m_y$ do.
\end{cor}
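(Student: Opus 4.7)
The plan is a direct combination of Proposition~\ref{d(mx,my)} with Lemma~\ref{more preserved products}: write the distance formula on both sides of $f$ and verify that each term in the formula is preserved. There is no real mathematical obstacle here — all the substantive work is already contained in the preceding lemmas — so the task is essentially bookkeeping, making sure that the hypotheses required for each application are supplied by the assumption of the corollary.

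First I would apply Proposition~\ref{d(mx,my)} in $X$ to get
\[d(m_x,m_y)=(y_1\cdot y_2)_{m_x}+\bigl|(y_1\cdot y)_{m_x}-(y_2\cdot y)_{m_x}\bigr|.\]
To write the analogous formula in $Y$ for $d(m_x',m_y')$, I need both hypotheses of Proposition~\ref{d(mx,my)} to hold on the image side, namely $f(x_1)\op_{f(x)}f(x_2)$ and $f(y_1)\op_{f(y)}f(y_2)$. These are immediate from Proposition~\ref{invariance of op}, so
\[d(m_x',m_y')=(y_1\cdot y_2)^f_{m_x'}+\bigl|(y_1\cdot y)^f_{m_x'}-(y_2\cdot y)^f_{m_x'}\bigr|.\]

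It then remains to show that the three Gromov products $(y_1\cdot y_2)_{m_x}$, $(y_1\cdot y)_{m_x}$ and $(y_2\cdot y)_{m_x}$ are preserved by $f$. For this I would invoke Lemma~\ref{more preserved products} three times, with $(u,v)$ equal to each of the pairs $(y_1,y_2)$, $(y_1,y)$ and $(y_2,y)$ in turn. For any such choice, the lemma's hypotheses require that $(x_1,x_2,u,v)$, $(x_1,x_2,x,u)$ and $(x_1,x_2,x,v)$ all lie in $\mscr{A}(X)$, which is exactly the standing assumption of the corollary — here $u$ and $v$ range over distinct elements of $\{y_1,y_2,y\}$. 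Comparing the two formulas above then yields $d(m_x,m_y)=d(m_x',m_y')$. The final assertion is immediate: $m_x=m_y$ iff $d(m_x,m_y)=0$, which by the preceding equality is equivalent to $d(m_x',m_y')=0$, i.e.\ $m_x'=m_y'$.
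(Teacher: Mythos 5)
Your proof is correct and matches the paper's intent: the paper simply states that the corollary follows "immediately" from Lemma~\ref{more preserved products} and Proposition~\ref{d(mx,my)}, and your bookkeeping fills in exactly the expected details. In particular, you correctly note that the quantifier "whenever $u$ and $v$ are distinct elements of $\{y_1,y_2,y\}$" supplies all three memberships $(x_1,x_2,u,v),(x_1,x_2,x,u),(x_1,x_2,x,v)\in\mscr{A}(X)$ needed for Lemma~\ref{more preserved products}, and that Proposition~\ref{invariance of op} licenses applying Proposition~\ref{d(mx,my)} on the $Y$ side.
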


\subsection{Straight points.}

In order to ensure that the hypotheses of Corollary~\ref{distance is preserved} are satisfied, we will consider the following class of boundary points.

\begin{defn}
A point $x\in\partial X$ is \emph{straight} if there exists a straight ray $r\cu X$ with $r^+=x$; equivalently, $x$ is an endpoint of a straight line. We denote by $\partial_sX\cu\partial X$ the set of straight boundary points.
\end{defn}

Observe that two points $x,y\in\partial X$ are endpoints of a straight line $\g$ if and only if the interval $I(x,y)\cap X$ is isomorphic to $\R$. Indeed, $I(x,y)\cap X$ coincides with $\g$ in this case. The following result characterises such situations in a similar way to Lemma~\ref{op vs lcrt}.

\begin{lem}\label{invariance of straight geodesics}
Two points $x,y\in\partial X$ are endpoints of a straight line if and only if both the following are verified:
\begin{enumerate}
\item $I(x,y)\cap X\neq\emptyset$;
\item there do not exist points $z,w\in\partial X$ with $\crt(x,y,z,w)=\ll a:b:c\rr$ and $a<\min\{b,c\}<+\infty$.
\end{enumerate}
\end{lem}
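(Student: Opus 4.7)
The plan for the forward direction is a direct hyperplane-counting argument. If $\gamma$ is a straight line with endpoints $x,y$, then $I(x,y)\cap X=\gamma\neq\emptyset$ gives~(1). For~(2), suppose for contradiction that there exist $z,w\in\partial X$ with $\crt(x,y,z,w)=\ll a:b:c\rr$ and $a<\min\{b,c\}<\infty$. Applying Proposition~\ref{independent of basepoint} to $\Cr(x,w,y,z)=a-b$ and $\Cr(x,z,y,w)=a-c$ yields
\begin{align*}
b-a &= \#\mscr{W}(x,z|y,w)-\#\mscr{W}(x,y|z,w), \\
c-a &= \#\mscr{W}(x,w|y,z)-\#\mscr{W}(x,y|z,w),
\end{align*}
both strictly positive. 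Hence $\mscr{W}(x,z|y,w)$ and $\mscr{W}(x,w|y,z)$ are both nonempty; picking $\mf{u}_1,\mf{u}_2$ in them and letting $\mf{h}_i$ be the halfspace of $\mf{u}_i$ containing $x$, one verifies that $x\in\mf{h}_1\cap\mf{h}_2$, $y\in\mf{h}_1^*\cap\mf{h}_2^*$, $z\in\mf{h}_1\cap\mf{h}_2^*$, $w\in\mf{h}_1^*\cap\mf{h}_2$, so all four intersections are nonempty and $\mf{u}_1,\mf{u}_2$ are transverse. Since both lie in $\mscr{W}(x|y)=\mscr{W}(\gamma)$, this contradicts the straightness of $\gamma$.

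For the backward direction, assuming~(1) and~(2), the plan is to first show that $\mscr{W}(x|y)$ contains no two transverse hyperplanes. Once this is established, pick $v\in I(x,y)\cap X$ from~(1). Every geodesic from $v$ to $x$ (resp.\ to $y$) is then automatically straight, since its hyperplanes form a non-transverse subset of $\mscr{W}(x|y)$; because $v\in I(x,y)$ forces $\mscr{W}(v|x)\cap\mscr{W}(v|y)=\emptyset$, Lemma~\ref{straight rays 2} glues the two rays into a straight bi-infinite line with endpoints $x$ and $y$.

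To prove non-transversality, argue by contradiction: suppose $\mf{u}_1,\mf{u}_2\in\mscr{W}(x|y)$ are transverse, with $\mf{h}_i$ the halfspace of $\mf{u}_i$ containing $x$. They span a square in $X$, whose corners $v_{\pm\pm}$ lie in the four quadrants $\mf{h}_1^{\pm}\cap\mf{h}_2^{\pm}$. At the corner $v_{+-}\in\mf{h}_1\cap\mf{h}_2^*$, the no-extremal-vertex hypothesis forces an edge $e$ dual to some $\mf{w}\notin\{\mf{u}_1,\mf{u}_2\}$ (otherwise the link of $v_{+-}$ would be a single edge, hence a cone). By Lemma~\ref{straight rays}, extend $e$ to a straight bi-infinite line $\ell_z$; straightness and transversality of $\mf{u}_1,\mf{u}_2$ imply at most one of them belongs to $\mscr{W}(\ell_z)$, and an appropriate endpoint of $\ell_z$ yields $z\in\partial X$ with $z\in\mf{h}_1\cap\mf{h}_2^*$. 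An analogous construction at $v_{-+}$ produces $w\in\partial X$ with $w\in\mf{h}_1^*\cap\mf{h}_2$, and one checks that $\mf{u}_1\in\mscr{W}(x,z|y,w)$ and $\mf{u}_2\in\mscr{W}(x,w|y,z)$.

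The final step is to verify $(x,y,z,w)\in\mscr{A}$ and $\crt(x,y,z,w)=\ll a:b:c\rr$ with $a<\min\{b,c\}<\infty$, which will contradict~(2). By the identities of the first paragraph, this reduces to showing $\#\mscr{W}(x,y|z,w)$ is strictly smaller than both $\#\mscr{W}(x,z|y,w)$ and $\#\mscr{W}(x,w|y,z)$, while at least one of $b,c$ is finite (the other is allowed to be $+\infty$ within $\mscr{A}$). Since $z,w$ arise from straight rays in directions transverse to the $x$--$y$ axis, their ultrafilters remain close to the square spanned by $\mf{u}_1,\mf{u}_2$ in the appropriate quadrants, so few hyperplanes place $\{x,y\}$ on one side and $\{z,w\}$ on the other. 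Carrying out this quantitative verification of the Gromov-product counts, possibly after refining the construction by pushing $z,w$ deeper into their quadrants via iterated straight extensions, is the main obstacle of the proof.
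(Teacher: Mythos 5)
Your forward direction is correct, and it is genuinely different from the paper's. Instead of the paper's case analysis on whether the medians $m(x,y,z)$ and $m(x,y,w)$ lie in $X$ or in the boundary (invoking Lemma~\ref{op vs lcrt} when they lie in $X$), you unwind $\crt$ into hyperplane counts via Proposition~\ref{independent of basepoint}, produce $\mf{u}_1\in\mscr{W}(x,z|y,w)$ and $\mf{u}_2\in\mscr{W}(x,w|y,z)$, and observe that all four quadrants of $\mf{u}_1,\mf{u}_2$ are nonempty by the ultrafilter axioms for $\s_x,\s_y,\s_z,\s_w$, hence $\mf{u}_1,\mf{u}_2$ are transverse hyperplanes in $\mscr{W}(x|y)=\mscr{W}(\g)$, contradicting straightness. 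This is a clean, uniform argument that avoids the paper's case split and is, if anything, tidier.

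The backward direction, however, has a genuine gap, and it is one you yourself flag. You start from two transverse hyperplanes $\mf{u}_1,\mf{u}_2\in\mscr{W}(x|y)$, pick a square they span, and build $z,w$ from straight lines through corners of that square. The problem is that this square need not lie in $I(x,y)$: the corners $v_{+-},v_{-+}$ can be far from any geodesic between $x$ and $y$, and then the Gromov products $(x\cdot y)_{v_{+-}}$, $(x\cdot z)_{v_{+-}}$, etc.\ carry uncontrolled contributions from $\mscr{W}(v_{+-}|x,y)$. This is exactly why you cannot complete the quantitative estimate $\#\mscr{W}(x,y|z,w)<\min\{\#\mscr{W}(x,z|y,w),\#\mscr{W}(x,w|y,z)\}$ and defer it as ``the main obstacle.'' The paper avoids this entirely by a different entry point: instead of choosing an arbitrary square dual to a transverse pair, it observes that if $x,y$ are not endpoints of a straight line yet $I(x,y)\cap X\neq\emptyset$, then $I(x,y)\cap X$ (a convex subcomplex, hence itself a $\CAT$ cube complex) cannot be one-dimensional, so it \emph{contains} a square $s$. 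Because the corners $v_x,v_y$ of $s$ lie in $I(x,y)$, one gets $(x\cdot y)_{v_x}=0$ for free, and the remaining products $(x\cdot z)_{v_x}=0$, $(z\cdot w)_{v_x}=0$, $(y\cdot w)_{v_x}=1$, $(y\cdot z)_{v_x}\geq 1$ follow directly from Lemma~\ref{straight rays 2} once $r_x,r_y$ are chosen to first cross one of the two hyperplanes of $s$. Your argument could be salvaged by first proving that transverse hyperplanes in $\mscr{W}(x|y)$ always bound a square \emph{inside} $I(x,y)$ (which is true: swap two consecutive crossings on a bi-infinite geodesic from $x$ to $y$), but as written the construction of $z,w$ does not control the relevant counts, so the proof is incomplete.
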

\begin{proof}
We begin by assuming that $x$ and $y$ are endpoints of a straight line $\g$. Condition~$(1)$ is clearly satisfied and we are now going to prove condition~$(2)$ for points $z,w\in\partial X$. 

If $m(x,y,z)\in X$, then $x\op_zy$ and Lemma~\ref{op vs lcrt} shows that we cannot have $\crt(x,y,z,w)=\ll a:b:c\rr$ and $a<\min\{b,c\}<+\infty$. The same happens if $m(x,y,w)\in X$, as can be observed by simply swapping $z$ and $w$. We are left to examine the situation where $m(x,y,z)$ and $m(x,y,w)$ lie in the boundary; note that both medians must then belong to the set $\{x,y\}$. We can assume that $m(x,y,z)\neq m(x,y,w)$ as otherwise the first coordinate of $\crt(x,y,z,w)$ is infinite. If $m(x,y,z)=x$ and $m(x,y,w)=y$, we have $\crt(x,y,z,w)=\ll\ast:\infty:0\rr$; otherwise, $\crt(x,y,z,w)=\ll\ast:0:\infty\rr$. In all cases $\crt(x,y,z,w)$ is not of the form $\ll a:b:c\rr$ with $a<\min\{b,c\}<+\infty$ and condition~$(2)$ is satisfied.

We now assume that $I(x,y)\cap X\neq\emptyset$, but $x$ and $y$ are not endpoints of a straight line. We will show that condition~$(2)$ fails. The intersection $I(x,y)\cap X$ cannot be one-dimensional or it would be isomorphic to $\R$. Hence $I(x,y)\cap X$ contains a square $s$; denote by $\mf{w}$ and $\mf{w}'$ its hyperplanes. Let $v_x$ and $v_y$ be the vertices of $s$ such that $\{\mf{w},\mf{w}'\}$ is disjoint from $\mscr{W}(x|v_x)$ and $\mscr{W}(y|v_y)$. Lemma~\ref{straight rays} shows that there exist straight rays $r_x$ and $r_y$, based at $v_x$ and $v_y$ respectively, such that their first crossed hyperplane is $\mf{w}$. We set $z=r_x^+$ and $w=r_y^+$.

Lemma~\ref{straight rays 2} implies that $(z\cdot w)_{v_x}=0$, $(x\cdot z)_{v_x}=0$ and $(y\cdot w)_{v_x}=1$. Moreover, $(y\cdot z)_{v_x}\geq 1$ as $\mf{w}$ separates $v_x$ from $y$ and $z$. We conclude that $\crt(x,y,z,w)=\ll 0:1:c\rr$ with $c=(x\cdot w)_{v_x}+(y\cdot z)_{v_x}\geq 1$.
\end{proof}

\begin{prop}\label{invariance of straight}
\begin{enumerate}
\item We have $x\in\partial_sX$ if and only if $f(x)\in\partial_sY$.
\item If $x,y\in\partial X$ are endpoints of a straight line, so are the points $f(x)$ and $f(y)$.
\end{enumerate}
\end{prop}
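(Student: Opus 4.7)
The plan is to reduce both parts to Lemma~\ref{invariance of straight geodesics}, which characterises ``$x,y$ are endpoints of a straight line'' purely in terms of intervals and cross ratios on the Roller boundary, and hence in terms of data fully visible to the M\"obius bijection~$f$.

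I would first tackle part~(2). Condition~(1) of Lemma~\ref{invariance of straight geodesics} asks that $I(x,y)\cap X\neq\emptyset$; by Lemma~\ref{infinite Gromov product}(2) this is equivalent to $(x\cdot y)_v<+\infty$ for any $v\in X$, and unpacking the definition of $\mscr{A}$ one checks this happens precisely when the $4$--tuple $(x,x,y,y)$ lies in $\mscr{A}(X)$ (the sum $(x\cdot x)_v+(y\cdot y)_v$ is automatically infinite, so the other two sums, both equal to $2(x\cdot y)_v$, must be finite). Since $f$ sends $\mscr{A}(X)$ into $\mscr{A}(Y)$ by hypothesis, condition~(1) transfers to $(f(x),f(y))$.

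For condition~(2), I would argue that the existence of witnesses is preserved by $f$. If $z,w\in\partial X$ satisfy $\crt(x,y,z,w)=\ll a:b:c\rr$ with $a<\min\{b,c\}<+\infty$, then $b$ and $c$ are finite, and since $\ll\cdot:\cdot:\cdot\rr$ is an equivalence class under a common shift, all three defining sums of Gromov products are finite; hence $(x,y,z,w)\in\mscr{A}(X)$. The M\"obius property then gives $\crt^f(x,y,z,w)=\crt(x,y,z,w)$, so $f(z),f(w)$ witness the failure of condition~(2) for $(f(x),f(y))$ in $\partial Y$. Running the symmetric argument with the M\"obius inverse $f^{-1}\colon\partial Y\ra\partial X$ yields the converse. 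Combining, Lemma~\ref{invariance of straight geodesics} applied in both $X$ and $Y$ establishes part~(2).

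Part~(1) then follows directly from the equivalent formulation in the definition of straight point: $x\in\partial_sX$ if and only if there exists $y\in\partial X$ such that $x$ and $y$ are endpoints of a straight line (the other endpoint of the ambient straight line lies in $\partial X$), and part~(2) transfers this condition between $\partial X$ and $\partial Y$ via the bijection $f$. The only real subtlety throughout is the bookkeeping of $\mscr{A}$--membership so that the M\"obius hypothesis may be invoked; once that is done, everything reduces to Lemma~\ref{invariance of straight geodesics}.
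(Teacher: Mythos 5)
Your proof is correct and follows essentially the same route as the paper's: both reduce to Lemma~\ref{invariance of straight geodesics}, transfer condition~(1) via the observation that $I(x,y)\cap X\neq\emptyset$ is equivalent to $(x,x,y,y)\in\mscr{A}(X)$, and transfer condition~(2) by noting that a witnessing $4$--tuple with $a<\min\{b,c\}<+\infty$ automatically lies in $\mscr{A}$ and hence survives under $f$ and $f^{-1}$. The only cosmetic difference is that you prove both implications of part~(2) symmetrically before deducing part~(1), whereas the paper argues by contradiction after assuming $f(x),f(y)$ are not endpoints of a straight line; the underlying content is identical.
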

\begin{proof}
As a boundary point is straight if and only if it is an endpoint of a straight line, part~$(1)$ follows from part~$(2)$. If $x,y\in\partial_sX$ are endpoints of a straight line $\g\cu X$, we have $I(x,y)\cap X\neq\emptyset$. By part~$(2)$ of Lemma~\ref{infinite Gromov product}, this is equivalent to the fact that $(x,x,y,y)$ lies in $\mscr{A}(X)$. We conclude that $I(f(x),f(y))\cap Y\neq\emptyset$.

Now, if $f(x)$ and $f(y)$ were not endpoints of a straight line, Lemma~\ref{invariance of straight geodesics} would yield points $z,w\in\partial Y$ with $\crt(f(x),f(y),z,w)=\ll a:b:c\rr$ and ${a<\min\{b,c\}<+\infty}$. However, $\crt(x,y,f^{-1}(z),f^{-1}(w))$ would then have the same form, contradicting Lemma~\ref{invariance of straight geodesics}.
\end{proof}

The next result is our main motivation for considering straight points.

\begin{lem}\label{Gromov products of straight points}
Consider $x\in\partial_sX$ and a vertex $v\in X$. Given $y,z\in\partial_sX$ with $(y\cdot z)_v<+\infty$, at least one of the Gromov products $(x\cdot y)_v$, $(x\cdot z)_v$ is finite.
\end{lem}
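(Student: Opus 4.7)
The plan is to argue by contradiction: assuming $(x\cdot y)_v=(x\cdot z)_v=+\infty$, I will derive $(y\cdot z)_v=+\infty$. The key structural fact is that the straightness of $x$ forces the hyperplanes separating $v$ from $x$ to be nested, up to a finite discrepancy from the choice of basepoint.

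First I would use the straightness of $x$ to pick a straight ray $r$ with $r^+=x$ and $r(0)=v_0$. Since $r$ is straight, any two hyperplanes in $\mscr{W}(r)=\mscr{W}(v_0|x)$ are non-transverse; as they all contain $x$ on one side, their halfspaces containing $x$ are pairwise nested, yielding a chain $\mathfrak{h}_1\supsetneq\mathfrak{h}_2\supsetneq\cdots$ enumerated in the order $r$ crosses them. Switching from $v_0$ to the given basepoint $v$, the symmetric difference $\mscr{W}(v|x)\triangle\mscr{W}(v_0|x)$ is contained in the finite set $\mscr{W}(v|v_0)$, so there is a cofinite sub-chain $\mathfrak{C}\subseteq\{\mathfrak{h}_i\}$ whose bounding hyperplanes all lie in $\mscr{W}(v|x)$.

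The assumption $(x\cdot y)_v=\#\mscr{W}(v|x,y)=+\infty$, together with the finiteness of $\mscr{W}(v|x)\setminus\mathfrak{C}$, forces the set $\{i:\mathfrak{h}_i\in\mathfrak{C},\,y\in\mathfrak{h}_i\}$ to be infinite. The crucial observation is that $\{i:y\in\mathfrak{h}_i\}$ is an initial segment of $\mathbb{N}$: indeed, if $y\in\mathfrak{h}_i$ and $j<i$, then $y\in\mathfrak{h}_j$ since $\mathfrak{h}_i\subseteq\mathfrak{h}_j$. An infinite initial segment of $\mathbb{N}$ is all of $\mathbb{N}$, so $y\in\mathfrak{h}_i$ for every $i$. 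The same reasoning applied to $z$ gives $z\in\mathfrak{h}_i$ for every $i$. Hence the bounding hyperplane of each $\mathfrak{h}\in\mathfrak{C}$ lies in $\mscr{W}(v|y)\cap\mscr{W}(v|z)=\mscr{W}(v|y,z)$, which is therefore infinite, contradicting $(y\cdot z)_v<+\infty$.

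The only slightly delicate point is correctly tracking the finite discrepancy introduced by the change of basepoint from $v_0$ to $v$; everything else follows from the initial-segment argument applied to a nested chain. It is worth noting that the straightness of $y$ and $z$ does not actually enter this argument, even though the lemma's statement includes it.
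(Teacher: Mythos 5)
Your proof is correct, and it is a cleaner execution of the same core idea that the paper uses: the non-transversality of the hyperplanes in $\mscr{W}(r_x)$ produces a descending chain of halfspaces, and the two infinite intersections force the tail of that chain to lie simultaneously in $\s_y$ and $\s_z$, which blows up $\mscr{W}(v|y,z)$.

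Where you genuinely diverge from the paper is in how you handle $y$ and $z$. The paper's proof introduces straight rays $r_y$ and $r_z$, forms the three sets $\mc{U}_x=\mscr{W}(r_x)\cap\mscr{W}(v|x)$, $\mc{U}_y$, $\mc{U}_z$, and appeals to the interval-closedness of $\mscr{W}(r_y)$ and $\mscr{W}(r_z)$ (any hyperplane separating two elements of $\mscr{W}(r_y)$ lies in $\mscr{W}(r_y)$) to argue that $\mc{U}_x\cap\mc{U}_y$ and $\mc{U}_x\cap\mc{U}_z$ are cofinite in $\mc{U}_x$. Your argument replaces all of this with the observation that, in a nested chain $\mathfrak{h}_1\supsetneq\mathfrak{h}_2\supsetneq\cdots$, the index set $\{i:\mathfrak{h}_i\in\s_y\}$ is an initial segment of $\N$ (this uses only the ultrafilter axiom, not any ray representing $y$), so being infinite forces it to be all of $\N$. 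This eliminates the need for $r_y$ and $r_z$ entirely, and your closing remark that the straightness of $y$ and $z$ is never used is accurate: the statement holds verbatim for arbitrary $y,z\in\partial X$ once $x\in\partial_sX$. That is a mild but real strengthening, and also makes the argument self-contained in that it does not lean on interval-closedness of hyperplane sets of geodesics. Both approaches are short; yours is the tighter of the two.
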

\begin{proof}
Let $r_x$, $r_y$ and $r_z$ be straight rays representing $x$, $y$ and $z$, respectively. As the symmetric difference $\mscr{W}(r_x)\triangle\mscr{W}(v|x)$ is contained in $\mscr{W}(r_x(0)|v)$, the intersection $\mc{U}_x=\mscr{W}(r_x)\cap\mscr{W}(v|x)$ is cofinite in $\mscr{W}(v|x)$ and does not contain transverse hyperplanes. The same holds for $\mc{U}_y=\mscr{W}(r_y)\cap\mscr{W}(v|y)$ and $\mc{U}_z=\mscr{W}(r_z)\cap\mscr{W}(v|z)$. 

If we had $(x\cdot y)_v=(x\cdot z)_v=+\infty$, the set $\mscr{W}(v|x)$ would have infinite intersection with both $\mscr{W}(v|y)$ and $\mscr{W}(v|z)$. In particular, both $\mc{U}_x\cap\mc{U}_y$ and $\mc{U}_x\cap\mc{U}_z$ would be infinite. As any hyperplane separating two elements of $\mscr{W}(r_y)$ must lie in $\mscr{W}(r_y)$, the intersections $\mc{U}_x\cap\mc{U}_y$ and $\mc{U}_x\cap\mc{U}_z$ would then be cofinite in $\mc{U}_x$. Hence $\mc{U}_y\cap\mc{U}_z$ would be infinite, contradicting the fact that $(y\cdot z)_v<+\infty$.
\end{proof}

\begin{cor}\label{isom for straight}
Given points $x_1,x_2,x,y_1,y_2,y\in\partial_sX$ with $x_1\op_xx_2$ and $y_1\op_yy_2$, we have:
\[d(m(x_1,x_2,x),m(y_1,y_2,y))=d(m^f(x_1,x_2,x),m^f(y_1,y_2,y)).\]
\end{cor}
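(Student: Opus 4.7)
The plan is to deduce the statement directly from Corollary~\ref{distance is preserved}. The only thing to check is that the set $\{x_1,x_2,x,y_1,y_2,y\}\cu\partial_sX$ satisfies the $\mscr{A}(X)$--membership hypothesis of that corollary: namely, that $(x_1,x_2,u,v)\in\mscr{A}(X)$ for any two distinct $u,v\in\{y_1,y_2,y\}$ and that $(x_1,x_2,x,u)\in\mscr{A}(X)$ for every $u\in\{y_1,y_2,y\}$. Once this is established, Corollary~\ref{distance is preserved} yields the claimed equality of distances.

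Fix a basepoint $p\in X$. The conditions $x_1\op_xx_2$ and $y_1\op_yy_2$ force $m(x_1,x_2,x)$ and $m(y_1,y_2,y)$ to lie in $X$, so by Lemma~\ref{infinite Gromov product} every pairwise Gromov product $(a\cdot b)_p$ with $a,b\in\{x_1,x_2,x\}$ or $a,b\in\{y_1,y_2,y\}$ is finite. This already takes care of one of the three ``summed Gromov products'' appearing in the definition of $\mscr{A}$ for each $4$--tuple of interest: for $(x_1,x_2,u,v)$ the sum $(x_1\cdot x_2)_p+(u\cdot v)_p$ is finite, and for $(x_1,x_2,x,u)$ the sums $(x_1\cdot x)_p+(x_2\cdot u)_p$ and $(x_2\cdot x)_p+(x_1\cdot u)_p$ share finite summands with the previous observations.

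The essential input is then Lemma~\ref{Gromov products of straight points}, which for a straight point $a\in\partial_sX$ and straight points $b,c\in\partial_sX$ with $(b\cdot c)_p<+\infty$ forces at least one of $(a\cdot b)_p,(a\cdot c)_p$ to be finite. Applying this with $a=u$ (resp.\ $a=v$) and $\{b,c\}=\{x_1,x_2\}$ (using that $(x_1\cdot x_2)_p<+\infty$) yields that at least one of $(u\cdot x_1)_p,(u\cdot x_2)_p$ is finite and at least one of $(v\cdot x_1)_p,(v\cdot x_2)_p$ is finite. A brief case analysis then rules out the possibility that both $(x_1\cdot u)_p+(x_2\cdot v)_p$ and $(x_1\cdot v)_p+(x_2\cdot u)_p$ are infinite, settling $(x_1,x_2,u,v)\in\mscr{A}(X)$. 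For the tuple $(x_1,x_2,x,u)$, one uses the same lemma with $a=u$, $\{b,c\}=\{x_1,x_2\}$ again, combined with $a=u$ and $\{b,c\}=\{x,x_1\}$ or $\{x,x_2\}$ when needed, to conclude analogously.

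I expect the main (minor) obstacle to be simply the bookkeeping in the case analysis: verifying that whichever way one tries to make two summed Gromov products infinite, Lemma~\ref{Gromov products of straight points} applied to a suitable straight point forces a contradiction. Nothing deeper is required, as straightness of all six points and the finiteness of mutual Gromov products within each opposite triple carry all the weight.
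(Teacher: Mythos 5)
Your approach is the same as the paper's: verify the $\mscr{A}(X)$--membership hypotheses of Corollary~\ref{distance is preserved} and deduce them from Lemma~\ref{Gromov products of straight points}, using that all pairwise Gromov products within each opposite triple are finite. Your treatment of the tuples $(x_1,x_2,x,u)$ is fine. However, the lemma applications you explicitly invoke for the tuple $(x_1,x_2,u,v)$ --- namely $a=u$ and $a=v$, each against $\{b,c\}=\{x_1,x_2\}$ --- do \emph{not} suffice. They only tell you that $(u\cdot x_1)_p$, $(u\cdot x_2)_p$ are not both infinite, and likewise for $v$. This leaves open the case where, say, $(x_1\cdot u)_p=(x_1\cdot v)_p=+\infty$ while $(x_2\cdot u)_p$ and $(x_2\cdot v)_p$ are finite; there both sums $(x_1\cdot u)_p+(x_2\cdot v)_p$ and $(x_1\cdot v)_p+(x_2\cdot u)_p$ are infinite, so $(x_1,x_2,u,v)$ would fail to lie in $\mscr{A}(X)$. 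To exclude this case you must additionally apply Lemma~\ref{Gromov products of straight points} with the straight point drawn from the \emph{other} triple, i.e.\ $a=x_1$ (or $a=x_2$) and $\{b,c\}=\{u,v\}$, using that $(u\cdot v)_p<+\infty$. This is exactly the second half of the paper's case split. With that extra application added, your argument is complete and coincides with the paper's proof.
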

\begin{proof}
We only need to verify the hypotheses of Corollary~\ref{distance is preserved}. To this end, let $u$ and $v$ be distinct elements of the set $\{y_1,y_2,y\}$ and fix a basepoint $p\in X$. As the Gromov products $(x_1\cdot x_2)_p$, $(x_1\cdot x)_p$ and $(x_2\cdot x)_p$ are all finite, Lemma~\ref{Gromov products of straight points} shows that $(x_1,x_2,x,u)\in\mscr{A}(X)$. If $(x_1,x_2,u,v)$ does not lie in $\mscr{A}(X)$, we can assume, up to permuting the points, that either $(x_1\cdot u)_p=(x_2\cdot u)_p=+\infty$ or $(x_1\cdot u)_p=(x_1\cdot v)_p=+\infty$. As $(x_1\cdot x_2)_p$ and $(u\cdot v)_p$ are finite, both situations are ruled out by Lemma~\ref{Gromov products of straight points}. 
\end{proof}

\subsection{Skinny vertices.}

We now address the problem of which vertices $v\in X$ can be represented as median of a triple such as those in Corollary~\ref{isom for straight}. Throughout this subsection, $X$ is required to have at least two vertices. 

We say that a vertex $v\in X$ is \emph{skinny} if $\deg(v)=2$. Skinny vertices are exactly those that are `invisible from the boundary', as we now describe.

\begin{lem}\label{skinny points}
For a vertex $v\in X$, the following are equivalent:
\begin{enumerate}
\item $v$ is not skinny;
\item there exist $x,y,z\in\partial_sX$ such that $m(x,y,z)=v$ and $x\op_z y$.
\end{enumerate}
\end{lem}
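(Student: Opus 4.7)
For direction $(2)\Rightarrow(1)$, I would argue the contrapositive. Assume $v$ is skinny, so $|\mscr{W}_v|=2$. For any boundary point $w\in\partial X$, any combinatorial ray from $v$ to $w$ has a first edge, whose dual hyperplane lies in $\mscr{W}_v\cap\mscr{W}(v|w)$; hence this set is nonempty. Now if $x,y,z\in\partial X$ satisfied $m(x,y,z)=v$, then $\mscr{W}(v|x)$, $\mscr{W}(v|y)$, $\mscr{W}(v|z)$ would be pairwise disjoint (as $v$ lies in $I(x,y)$, $I(y,z)$, $I(x,z)$), so the same would hold for their intersections with $\mscr{W}_v$; but three nonempty pairwise disjoint subsets of a $2$--element set cannot exist.

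For direction $(1)\Rightarrow(2)$, I would construct three straight rays emanating from $v$ whose endpoints at infinity form the required triple. Since $v$ is not extremal, $\lk(v)$ is not a cone over any of its vertices. Pick any $e_x\in\lk(v)$; non-cone-ness produces $e_y\in\lk(v)$ not joined to $e_x$ by an edge, and $\deg(v)\geq 3$ lets one pick a third $e_z\in\lk(v)\setminus\{e_x,e_y\}$. The dual hyperplanes $\mf{w}_x,\mf{w}_y,\mf{w}_z$ are then pairwise distinct (distinct edges at $v$ have distinct duals), with $\mf{w}_x$ and $\mf{w}_y$ non-transverse. By Lemma~\ref{straight rays}, each $e_i$ extends to a straight bi-infinite line through $v$; I take the ray $r_i$ starting at $v$ along this line in the direction of $e_i$, which is straight with first hyperplane $\mf{w}_i$, and set $x:=r_x^+$, $y:=r_y^+$, $z:=r_z^+\in\partial_sX$.

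The crucial observation is that for each straight ray $r_i$, the only hyperplane in $\mscr{W}(r_i)$ adjacent to $v$ is $\mf{w}_i$: any other $\mf{w}\in\mscr{W}(r_i)$ is non-transverse to $\mf{w}_i$ (by straightness) and is crossed later by $r_i$, so it lies entirely on the side of $\mf{w}_i$ opposite $v$; hence $\mf{w}_i$ separates $v$ from $\mf{w}$ and $\mf{w}$ cannot be adjacent to $v$. The adjacent subsets $\{\mf{w}_x\},\{\mf{w}_y\},\{\mf{w}_z\}$ being pairwise disjoint, Lemma~\ref{straight rays 2} turns each pairwise union $r_i\cup r_j$ into a line, forcing $\mscr{W}(r_x),\mscr{W}(r_y),\mscr{W}(r_z)$ to be pairwise disjoint. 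Equivalently, $v\in I(x,y)\cap I(y,z)\cap I(x,z)$, so $m(x,y,z)=v$. The remark after Definition~\ref{opposite defn} then gives $x\op_zy$, since $\mscr{W}(v|x)\cap\mscr{W}_v=\{\mf{w}_x\}$ and $\mscr{W}(v|y)\cap\mscr{W}_v=\{\mf{w}_y\}$ are non-transverse.

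The main subtlety is the link-theoretic step: extracting from the non-extremality of $v$ two edges at $v$ failing to span a square, in order to control the transversality of $\mf{w}_x$ and $\mf{w}_y$. Once this pair is secured and a third edge added to witness the median equality, the rest is a clean application of Lemmas~\ref{straight rays} and~\ref{straight rays 2} combined with the characterisation of opposite points at $v$.
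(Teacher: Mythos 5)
Your proof is correct and takes essentially the same approach as the paper's: pick three distinct edges at $v$, two with non-transverse dual hyperplanes, extend to straight rays via Lemma~\ref{straight rays}, then apply Lemma~\ref{straight rays 2} and the remark after Definition~\ref{opposite defn}. You carefully spell out why the only $v$-adjacent hyperplane in $\mscr{W}(r_i)$ is $\mf{w}_i$ (a step the paper leaves implicit), but you tacitly assume $\deg(v)\geq 3$, which the paper justifies explicitly by ruling out $\deg(v)\in\{0,1\}$ using that $X$ has at least two vertices and $v$ is not extremal.
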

\begin{proof}
If there exist $x,y,z\in\partial X$ with $m(x,y,z)=v$, the rays from $v$ to $x$, $y$ and $z$ must begin with three pairwise distinct edges. Hence $\deg(v)\geq 3$, which shows $(2)\Ra (1)$. 

Regarding the implication $(1)\Ra (2)$, we have $\deg(v)\geq 3$ as soon as $v$ is not skinny. Indeed, $\deg(v)=0$ can only happen if $X$ is a single point and $\deg(v)=1$ would violate the assumption that $X$ has no extremal vertices. Let $e_1$, $e_2$ and $e_3$ be pairwise distinct edges incident to $v$; as $v$ is not extremal, we can assume that $\mf{w}(e_1)$ and $\mf{w}(e_2)$ are not transverse. Lemma~\ref{straight rays} allows us to extend each $e_i$ to a straight ray $r_i$. By Lemma~\ref{straight rays 2}, each union $r_i\cup r_j$ is a line if $i\neq j$. Setting $x=r_1^+$, $y=r_2^+$ and $z=r_3^+$, we thus have $m(x,y,z)=v$. As $r_1\cup r_2$ is straight, we also have $x\op_zy$.
\end{proof}

Denote by $\mscr{V}\cu X$ the set of skinny vertices. Let $\mscr{S}\cu X$ be the union of all edges intersecting $\mscr{V}$. Let $\mscr{F}\cu X$ be the full subcomplex with vertex set $X\setminus\mscr{V}$. We call $\mscr{S}$ and $\mscr{F}$ the \emph{skinny} and \emph{fat parts} of $X$, respectively. We remark that $X=\mscr{F}\cup\mscr{S}$ and that every vertex in $\mscr{S}\setminus\mscr{F}$ is skinny. We will employ the notation $\mscr{V}(X)$, $\mscr{S}(X)$ and $\mscr{F}(X)$ when it is necessary to specify the cube complex under consideration.

If $X\not\simeq\R$, each connected component of $\mscr{S}$ is either a straight ray or a straight segment; we refer to these as \emph{skinny rays} and \emph{skinny segments}. Every skinny ray intersects $\mscr{F}$ at a single vertex; given $v\in\mscr{F}$, we denote by $\mscr{R}(v)\cu\partial_sX$ the set of endpoints at infinity of skinny rays based at $v$. 

\begin{lem}\label{skinny rays}
A vertex $v\in X$ and a point $x\in\partial X$ are endpoints of a skinny ray if and only if there exist $y,z\in\partial_sX$ with the following properties:
\begin{enumerate}
\item $m(x,y,z)=v$ and $x\op_zy$;
\item for every $w\in\partial_sX\setminus\{x\}$ we have $\crt(x,y,z,w)=\ll\ast:\ast:0\rr$.
\end{enumerate}
\end{lem}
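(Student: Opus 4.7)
My plan is to prove the equivalence in two directions.

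For $(\Rightarrow)$, I will suppose $r$ is a skinny ray from $v$ to $x$ with first edge $e_1$ and hyperplanes $\mf{w}_i := \mf{w}(r(i-1)r(i))$. I will first build $y$ and $z$ satisfying (1) as in Lemma~\ref{skinny points}: since $v \in \mscr{F}$ is non-skinny and non-extremal, I pick edges $e_2, e_3$ at $v$, pairwise distinct and distinct from $e_1$, with $\mf{w}_1 \not\pitchfork \mf{w}(e_2)$, and extend them to straight rays $r_2, r_3$ via Lemma~\ref{straight rays}; then $y := r_2^+$, $z := r_3^+ \in \partial_s X$, with condition~(1) following from Lemma~\ref{straight rays 2} and the choice of $e_2$. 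To verify~(2), I will show the stronger statement that $(x \cdot w)_v = 0$ for every $w \in \partial X \setminus \{x\}$, which immediately gives $\crt_v(x,y,z,w) = \ll(z \cdot w)_v : (y \cdot w)_v : 0 \rr = \ll \ast : \ast : 0 \rr$. The key input is that the skinniness of $r(1), r(2), \ldots$ combined with the straightness of $r$ forces each $\mf{w}_i$ to be dual to the unique edge $r(i-1)r(i)$: any square containing this edge would, at one of its skinny endpoints, force its other edge there to lie in $r$, producing either a transversality contradicting straightness of $r$ or a shortcut contradicting that $r$ is a geodesic. Consequently the far side of $\mf{w}_i$ in $X$ equals exactly $\{r(i), r(i+1), \ldots\}$, and any Roller boundary point on that side is represented by a ray eventually coinciding with $r$, hence equals $x$.

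For $(\Leftarrow)$, suppose $y, z \in \partial_s X$ satisfy~(1) and~(2). Condition~(1) provides three rays from $v$ with distinct initial edges, so $\deg(v) \geq 3$ and $v \in \mscr{F}$. Fix $e_1$ a first edge of some ray from $v$ to $x$, and let $r_2, r_3$ be straight rays from $v$ to $y, z$ with first edges $e_2, e_3$. My first step will be to show $x \in \partial_s X$: I extend $e_1$ to a straight ray $r_1$ via Lemma~\ref{straight rays} and set $w := r_1^+ \in \partial_s X$. By Lemma~\ref{straight rays 2}, $r_1 \cup r_2$ is a geodesic line through $v$ (the adjacent hyperplanes at $v$ are distinct), so $(y \cdot w)_v = 0$; and $\mf{w}(e_1) \in \mscr{W}(v|x) \cap \mscr{W}(v|w)$ gives $(x \cdot w)_v \geq 1$. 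If $w \neq x$ then~(2) would yield the contradiction $1 \leq (x \cdot w)_v \leq (y \cdot w)_v = 0$; therefore $w = x$ and $r_1$ is a straight ray from $v$ to $x$.

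Next I will show, by induction on $i$, that every $r_1(i)$ with $i \geq 1$ is skinny, which will imply that $r_1$ is a skinny ray from $v$ to $x$. Inductively assume $r_1(1), \ldots, r_1(k-1)$ are skinny, and suppose for contradiction that $\deg(r_1(k)) \geq 3$. Applying the forward-direction uniqueness argument to the initial segment $r_1|_{[0,k]}$ (which is straight, as part of $r_1$, and has skinny interior vertices by induction), each $\mf{w}_i$ for $i \leq k$ is dual to the unique edge $r_1(i-1)r_1(i)$. I pick an edge $e'$ at $r_1(k)$ distinct from the two edges of $r_1$ there, extend $e'$ to a straight line $L$ via Lemma~\ref{straight rays}, and note that its two endpoints $w^+, w^- \in \partial_s X$ are necessarily distinct. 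Using the uniqueness of dual edges, I choose a half of $L$ based at $r_1(k)$ which does not cross any $\mf{w}_i$ for $i \leq k$ and whose endpoint $w \in \{w^+, w^-\}$ is distinct from $x$; concatenating this half with $r_1|_{[0,k]}$ yields a geodesic ray $\rho$ from $v$ to $w$ whose first hyperplane is $\mf{w}_1$ and satisfies $\mscr{W}(\rho) \supseteq \{\mf{w}_1, \ldots, \mf{w}_k\}$. Then $(x \cdot w)_v \geq k \geq 1$, while Lemma~\ref{straight rays 2} applied to $\rho \cup r_2$ (distinct first edges $e_1 \neq e_2$ at $v$) gives $(y \cdot w)_v = 0$; applying~(2) to $w \neq x$ yields the contradiction $(x \cdot w)_v \leq 0$.

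The main technical obstacle is the inductive step above: precisely, showing that one can choose a half of $L$ with endpoint $w \neq x$ which does not cross any of $\mf{w}_1, \ldots, \mf{w}_k$. This will require a case analysis based on the structure of $L$ at $r_1(k)$ (in particular, whether or not $L$ contains the edge $r_1(k-1)r_1(k)$ and, if so, what happens as $L$ continues past $v$), combined with the uniqueness-of-dual-edge property just established. Because $w^+$ and $w^-$ are distinct, at most one of them can coincide with $x$; the analysis will show that in every case the remaining endpoint both satisfies $w \neq x$ and gives rise to a valid ray $\rho$, completing the contradiction.
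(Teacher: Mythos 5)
Your forward direction is correct and essentially equivalent to the paper's, with a slightly different construction of $y$: the paper extends the skinny ray $r$ itself to a straight line and takes $y$ as its other endpoint, while you pick a fresh edge $e_2$ at $v$ with $\mf{w}(e_1)\not\pitchfork\mf{w}(e_2)$. Both work. Your key observation that skinniness of the interior vertices forces each $\mf{w}_i$ to be dual to a unique edge (so the far side of $\mf{w}_1$ contains only the boundary point $x$) is a valid and clean way to get $(x\cdot w)_v=0$.

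The reverse direction, however, has a genuine gap that you yourself flag as the ``main technical obstacle'' and do not resolve. Concretely, after fixing a non-skinny vertex $r_1(k)$ and an edge $e'$ there, you must establish three things: (a) the concatenation $\rho = r_1|_{[0,k]}\cup L^+$ is a geodesic, (b) $w:=(L^+)^+\neq x$, and (c) $(y\cdot w)_v=0$. You sketch a case analysis over which half of $L$ to pick, but never carry it out. It can in fact be done without any case analysis: since $r_1|_{[0,k]}$ and $L^+$ are both \emph{straight} geodesics issuing from $r_1(k)$ with distinct initial edges, the Lemma~\ref{straight rays 2} argument gives (a) directly; and one checks $\mf{w}(e')\notin\mscr{W}(v|x)=\{\mf{w}_1,\mf{w}_2,\dots\}$ by observing that $\mf{w}_i$ for $i\le k-1$ is dual only to $e_i$ (not at $r_1(k)$), that $\mf{w}_k,\mf{w}_{k+1}$ are dual to edges at $r_1(k)$ distinct from $e'$, and that $\mf{w}_i$ for $i\ge k+2$ is not adjacent to $r_1(k)$ by straightness of $r_1$; this yields (b). For (c), your invocation of Lemma~\ref{straight rays 2} applied to $\rho\cup r_2$ is dubious: $\rho$ need not be straight, so its set of adjacent hyperplanes at $v$ is not obviously $\{\mf{w}_1\}$, and you would still need to rule out $\mf{w}(e_2)\in\mscr{W}(L^+)$. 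The safe route is Remark~\ref{one is zero}: since $x\op_zy$ with median $v$, the bound $(x\cdot w)_v\ge k\ge1$ forces $(y\cdot w)_v=0$.

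The paper's proof of the reverse direction is both shorter and bypasses all of this. It does not first prove $x\in\partial_sX$ and does not induct along $r_1$. Instead it directly says: if $I(x,v)\cap X$ is not a skinny ray, it contains a vertex $u\neq v$ with $\deg(u)\geq3$; pick hyperplanes $\mf{w}_x\in\mscr{W}(u|x)$ and $\mf{w}_v\in\mscr{W}(u|v)$ adjacent to $u$, take an edge $e$ at $u$ dual to neither (it exists because $\deg(u)\ge3$), extend $e$ to a straight ray $\g$, and set $w=\g^+$. Then $\mf{w}_x$ separates $w$ from $x$ (so $w\neq x$), $\mf{w}_v$ separates $v$ from both $w$ and $x$ (so $(x\cdot w)_v>0$), and Remark~\ref{one is zero} gives $(y\cdot w)_v=0$, contradicting condition~(2). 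I would encourage you to compare this with your construction: it achieves the same contradiction in one step by working locally at $u$ rather than globally from $v$, which is what sidesteps the need to control all the hyperplanes $\mf{w}_1,\dots,\mf{w}_k$ simultaneously.
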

\begin{proof}
Suppose that $v$ and $x$ are endpoints of a skinny ray $r$. Lemma~\ref{straight rays} allows us to extend $r$ to a straight line $\g$; let $y$ be the endpoint of $\g$ other than $x$. As $\deg(v)\geq 3$ by definition, we can construct a straight ray $r'$ based at $v$ and disjoint from $\g$. Setting $z=(r')^+$, we have $m(x,y,z)=v$ and $x\op_zy$. If $w\in\partial X$ and $w\neq x$, we must have $m(x,y,w)\not\in I(x,v)\setminus\{v\}$; Remark~\ref{one is zero} then shows that $m(x,y,w)\in I(v,y)$ and $(x\cdot w)_v=0$. As $(y\cdot z)_v$ also vanishes, $\crt(x,y,z,w)$ is of the form $\ll\ast:\ast:0\rr$ as required.

Conversely, suppose that $y$ and $z$ are given satisfying condition~$(1)$. If $I(x,v)\cap X$ is not a skinny ray, it contains a vertex $u\neq v$ with $\deg(u)\geq 3$. Let $\mf{w}_x\in\mscr{W}(u|x)$ and $\mf{w}_v\in\mscr{W}(u|v)$ be hyperplanes adjacent to $u$. Let $e$ be an edge incident to $u$ and not crossing $\mf{w}_x$ or $\mf{w}_v$; let $\g$ be a straight ray extending $e$ and set $w=\g^+$. We have $\mf{w}_x\in\mscr{W}(w|x)$ so $w\neq x$. Similarly, we have $\mf{w}_v\in\mscr{W}(w|v)$; this shows that $(x\cdot w)_v>0$ and, along with $x\op_vy$, it guarantees that $m(x,y,w)\in I(x,v)$ and $(y\cdot w)_v=0$. Thus condition~$(2)$ fails, as $\crt(x,y,z,w)=\ll\ast:0:c\rr$ with $c=(x\cdot w)_v>0$.
\end{proof}

\subsection{The isomorphism and its uniqueness.}

In this subsection, we complete the proof of the Main Theorem. We now require $X$ and $Y$ to be neither single points, nor isomorphic to $\R$. 

Consider a non-skinny vertex $v\in X$. Lemma~\ref{skinny points} provides three points ${x_1,x_2,x\in\partial_sX}$ with $m(x_1,x_2,x)=v$ and $x_1\op_xx_2$. We define a map $F\colon\mscr{F}(X)\ra\mscr{F}(Y)$ by setting $F(v)=m^f(x_1,x_2,x)$. Note that $F$ is well-defined and distance-preserving by Corollary~\ref{isom for straight}. 

Applying the same construction to the inverse $f^{-1}\colon\partial Y\ra\partial X$, we obtain $H\colon\mscr{F}(Y)\ra\mscr{F}(X)$. By Propositions~\ref{invariance of op} and~\ref{invariance of straight}, the compositions $F\o H$ and $H\o F$ are the identity. We conclude that $F$ is surjective and, in fact, an isometric bijection of fat parts.

\begin{thm}\label{skinny extension}
The map $F$ extends to a cubical isomorphism $F\colon X\ra Y$.
\end{thm}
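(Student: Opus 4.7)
The plan is to extend $F$ vertex by vertex across the skinny part $\mscr{S}(X)\setminus\mscr{F}(X)$, then verify the resulting map is a distance-preserving bijection of $0$--skeletons, which upgrades to a cubical isomorphism by the standard median-graph argument. As a preliminary observation, any corner of a cube of dimension $\geq 2$ has two edges in that cube that span a square; were such a corner skinny, those two would be its only edges, making it extremal and contradicting the hypothesis. Hence the skinny part is purely one-dimensional, decomposing as a disjoint union of skinny rays (one fat endpoint) and skinny segments (two fat endpoints); each such ray or segment is automatically straight, since the straight-line extension of its first edge produced by Lemma~\ref{straight rays} is forced to continue through every subsequent degree-$2$ vertex.

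For a skinny ray $r$ from a fat vertex $v$ to an ideal point $x\in\partial_sX$, I would invoke Lemma~\ref{skinny rays} to obtain auxiliary points $y,z\in\partial_sX$ witnessing its existence; combining the M\"obius property of $f$ with Propositions~\ref{invariance of op} and~\ref{invariance of straight} shows that $(f(x),f(y),f(z))$ satisfies the analogous conditions in $Y$, so the converse direction of Lemma~\ref{skinny rays} supplies a skinny ray $r'$ in $Y$ issuing from $m^f(x,y,z)=F(v)$ with ideal endpoint $f(x)$. Then extend $F$ along $r$ by matching the $n$-th vertex of $r$ with the $n$-th vertex of $r'$.

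For a skinny segment $\sigma$ from $v_1$ to $v_2$ of length $\ell$, the plan is to extend $\sigma$ to a straight bi-infinite line $\gamma\cu X$ with ideal endpoints $y_1,y_2\in\partial_sX$, push these to the ideal endpoints of a straight line $\gamma'\cu Y$ using Proposition~\ref{invariance of straight}(2), and then locate $F(v_1),F(v_2)$ on $\gamma'$. For the latter, I would write each $v_i=m(x_1^i,x_2^i,x^i)$ with $x_1^i\op_{x^i}x_2^i$ (Lemma~\ref{skinny points}); the hypotheses of Lemma~\ref{more preserved products} then hold for the auxiliary pair $(y_1,y_2)$ (exactly as in Corollary~\ref{isom for straight}, thanks to Lemma~\ref{Gromov products of straight points}), yielding $(y_1\cdot y_2)_{v_i}=(f(y_1)\cdot f(y_2))_{F(v_i)}$. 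Since $v_i\in\gamma=I(y_1,y_2)\cap X$ makes the left side vanish, we get $F(v_i)\in I(f(y_1),f(y_2))\cap Y=\gamma'$. The subsegment of $\gamma'$ between $F(v_1)$ and $F(v_2)$ is then the unique $Y$--geodesic of length $d(F(v_1),F(v_2))=\ell$, and its interior vertices must all be skinny in $Y$: any fat interior vertex $u'$ would pull back under the already-established fat-part inverse $H=F^{-1}|_{\mscr{F}}$ to a fat vertex $u\in X$ satisfying $d(v_1,u)+d(u,v_2)=\ell$, forcing $u$ onto the unique straight geodesic $\sigma$ and contradicting the skinniness of its interior. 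Extending $F$ across the interior of $\sigma$ by matching positions along $\sigma$ with this image skinny segment completes the construction.

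The main obstacle is precisely this last step: ensuring that the image of a skinny segment is again a skinny segment of the correct length in $Y$. Once $F$ is defined on all of $X$, the symmetric construction applied to $f^{-1}$ produces an inverse, and full distance preservation on $X$ follows by splitting any geodesic into consecutive maximal pieces that either live inside $\mscr{F}$ or run along a straight line through a skinny segment or ray, and summing up the cases already handled. Finally, since the $1$--skeleton of a \CAT\ cube complex is a median graph whose higher cube structure is determined by its graph metric (edges are pairs at distance $1$, squares are filled-in $4$--cycles with appropriate diagonals, and higher cubes arise inductively), any distance-preserving bijection of $1$--skeletons extends uniquely to a cubical isomorphism $F\colon X\to Y$, as required.
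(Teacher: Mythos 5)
Your proposal is correct, and your treatment of skinny rays coincides with the paper's: both invoke Lemma~\ref{skinny rays}, verify via Propositions~\ref{invariance of op} and~\ref{invariance of straight} and Lemma~\ref{Gromov products of straight points} that the characterising conditions transfer under $f$, and read off $f(\mscr{R}(v))\cu\mscr{R}(F(v))$. Where you diverge is the skinny-segment case. The paper handles it in two lines by observing that, since $F|_{\mscr{F}(X)}$ is already known to be a distance-preserving bijection of fat parts, the purely metric characterisation ``$v_1,v_2\in\mscr{F}(X)$ bound a skinny segment iff no $v_3\in\mscr{F}(X)\setminus\{v_1,v_2\}$ lies between them'' is automatically transported to $Y$; no further cross-ratio computation is needed. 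You instead extend $\sigma$ to a straight line $\gamma$, push the ideal endpoints forward via Proposition~\ref{invariance of straight}, place $F(v_1),F(v_2)$ on $\gamma'=I(f(y_1),f(y_2))\cap Y$ through Lemma~\ref{more preserved products} (with the $\mscr{A}$-membership checks supplied by Lemma~\ref{Gromov products of straight points}, exactly as in Corollary~\ref{isom for straight}), and then rule out fat interior vertices by pulling them back under $H=F|_{\mscr{F}}^{-1}$. This works and has the virtue of being very explicit --- it actually exhibits the image skinny segment inside a straight line of $Y$ --- but it re-derives information that the paper extracts for free from the isometry of fat parts. Your preliminary observation (that skinniness plus the no-extremal-vertices hypothesis forces $\mscr{S}(X)$ to be one-dimensional and each skinny component to be a straight segment or ray) is correct and used implicitly in the paper. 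One small point worth making precise if you write this up: the paper's characterisation, read literally, also classifies a single edge between two fat vertices as a ``degenerate skinny segment''; this is harmless since there is nothing to extend over, but your straight-line argument sidesteps the issue since a genuine skinny segment always has length $\geq 2$. Both proofs end, as yours does, by appealing to the standard fact that a median-graph isometry determines the cubical structure.
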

\begin{proof}
Observe that two vertices $v_1,v_2\in\mscr{F}(X)$ are endpoints of a skinny segment if and only if there does not exist any $v_3\in\mscr{F}(X)\setminus\{v_1,v_2\}$ satisfying $d(v_1,v_2)=d(v_1,v_3)+d(v_3,v_2)$. Thus $v_1$ and $v_2$ are endpoints of a skinny segment of length $\ell$ if and only if $F(v_1)$ and $F(v_2)$ are. We can therefore isometrically extend $F$ over all skinny segments in $X$. 

We are left to deal with skinny rays. We conclude by showing that $f(\mscr{R}(v))=\mscr{R}(F(v))$ for all $v\in\mscr{F}(X)$. It suffices to prove the inclusion $f(\mscr{R}(v))\cu\mscr{R}(F(v))$ and then apply the same argument to $f^{-1}$. 

Consider $x\in\mscr{R}(v)$ and let $y,z\in\partial_sX$ be the points provided by Lem\-ma~\ref{skinny rays}. Since $x\in\partial_sX$, we have $m^f(x,y,z)=F(v)$ and, by Proposition~\ref{invariance of op}, also $x\op_z^fy$. Given $w\in\partial_sY\setminus\{f(x)\}$, the point $f^{-1}(w)$ lies in the set $\partial_sX\setminus\{x\}$ by Proposition~\ref{invariance of straight}. Lemma~\ref{skinny rays} shows that $\crt(x,y,z,f^{-1}(w))$ is of the form $\ll\ast:\ast:0\rr$ and, by Lemma~\ref{Gromov products of straight points}, the $4$--tuple $(x,y,z,f^{-1}(w))$ lies in $\mscr{A}(X)$. Hence $\crt(f(x),f(y),f(z),w)=\ll\ast:\ast:0\rr$ for all $w\in\partial_sY\setminus\{f(x)\}$. Lemma~\ref{skinny rays} finally implies that $f(x)\in\mscr{R}(F(v))$.
\end{proof}

Now, the isomorphism $F\colon X\ra Y$ extends to an isomorphism of median algebras $\overline F\colon\overline X\ra\overline Y$. We conclude the proof of the Main Theorem via:

\begin{thm}\label{main thm}
The map $F$ is the only cubical isomorphism with $\overline F|_{\partial X}=f$. 
\end{thm}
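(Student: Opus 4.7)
The plan is to show that any cubical isomorphism $F'\colon X\to Y$ with $\overline{F'}|_{\partial X}=f$ must coincide with $F$. Setting $G:=F^{-1}\circ F'$, this reduces to proving that a cubical automorphism $G$ of $X$ whose median extension $\overline G$ acts as the identity on $\partial X$ is itself the identity. Since a cubical automorphism is determined by its action on the vertex set (each cube being recovered from its vertices), it suffices to show that $G$ fixes every vertex of $X$.

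First, I would fix all fat vertices. Given $v\in\mscr{F}(X)$, Lemma~\ref{skinny points} produces $x,y,z\in\partial_sX$ with $v=m(x,y,z)$. Since $\overline G$ is a median isomorphism of $\overline X$ that is trivial on $\partial X$,
\[G(v)=m(\overline G(x),\overline G(y),\overline G(z))=m(x,y,z)=v.\]

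Next, I would handle the skinny vertices. If $r$ is a skinny ray with fat endpoint $v\in\mscr{F}(X)$ and endpoint at infinity $x\in\partial X$, then both $v$ and $x$ are fixed by $\overline G$, so $G(r)$ is again a skinny ray from $v$ to $x$. Such a ray is unique: because $r$ is straight, $\mscr{W}(r)$ contains no pair of transverse hyperplanes, and since $\mscr{W}(r)=\mscr{W}(v|x)$ for any geodesic ray with these endpoints, this common hyperplane set is totally ordered by distance from $v$, which completely determines the sequence of edges of $r$. Hence $G(r)=r$, and $G|_r$ is then an isometry of the ray $r$ fixing its basepoint $v$, so it is the identity. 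The same reasoning applied to a skinny segment between two fat vertices $v_1,v_2$ (both fixed by $G$) shows that $G$ fixes every such segment pointwise.

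Since every vertex of $X$ lies either in $\mscr{F}(X)$ or on a skinny segment or skinny ray, we conclude $G=\id_X$, and therefore $F'=F$. The main subtle point is the uniqueness of the skinny ray or segment joining a prescribed pair of endpoints; this rests on the structural fact, already underlying Theorem~\ref{skinny extension}, that every connected component of $\mscr{S}$ is a \emph{straight} subcomplex of $X$.
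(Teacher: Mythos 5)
Your proposal only addresses the uniqueness half of the theorem, and that is the easy half — the paper dismisses it in a single sentence (``clear from our construction''). The bulk of the paper's proof of Theorem~\ref{main thm} is devoted to the other half: verifying that the map $F$ you constructed in Theorem~\ref{skinny extension} actually satisfies $\overline F|_{\partial X}=f$. This is genuinely non-trivial, because $F$ was built by gate-projecting \emph{triples of boundary points to medians}, and there is no a priori reason that the median extension of $F$ to $\overline X$ should agree with $f$ on all of $\partial X$. The paper proves this in two stages: for $x\in\partial_s X$ it uses approximating vertices $v_n\to x$ along a straight line, produces auxiliary points $z_n$ with $m(x,y,z_n)=v_n$, and shows $F(v_n)=m^f(x,y,z_n)\to f(x)$; for non-straight $x$, it argues by contradiction, constructing a carefully chosen $w\in\partial_s X$ and deriving $\Cr(\overline F(x),w,y,z)>\Cr(f(x),w,y,z)$ from a separating hyperplane. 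None of this appears in your proposal.

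Worse, your uniqueness argument secretly relies on the missing half. You set $G:=F^{-1}\circ F'$ and assert that $\overline G$ is the identity on $\partial X$. But $\overline G|_{\partial X}=\overline F^{-1}\circ\overline{F'}|_{\partial X}=\overline F^{-1}\circ f$, and this is the identity exactly when $\overline F|_{\partial X}=f$ — the very fact you have not established. So as written the argument is circular (or at least begs the key point). Your reduction to fixing fat vertices via medians and your structural observation that a skinny ray or segment is determined by its endpoints (since its hyperplane set is a chain) are both correct and would serve as a clean writeup of the ``clear from construction'' step, but you must first prove $\overline F(x)=f(x)$ for every $x\in\partial X$ before any of it applies.
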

\begin{proof}
The uniqueness of $F$ is clear from our construction. We need to prove that $\overline F(x)=f(x)$ for every $x\in\partial X$. First, we suppose that $x\in\partial_sX$. 

Let $\g$ be a straight line with an endpoint at $x$; denote by $y$ the other endpoint of $\g$. We can assume that $x$ is not endpoint of a skinny ray, as $\overline F$ and $f$ clearly coincide on those. Thus, there exist vertices $v_n\in\g$ with $\deg(v_n)\geq 3$ and $v_n\ra x$; we can moreover assume that $v_{n+1}\in I(v_n,x)\setminus\{v_n\}$.

Let $e_n$ be an edge with $e_n\cap\g=\{v_n\}$. Extending $e_n$ to a straight ray, we construct $z_n\in\partial_sX$ with $m(x,y,z_n)=v_n$ and $x\op_{z_n}y$. Note that $\crt_{v_0}(x,y,z_0,z_n)=\ll\ast:0:c_n\rr$, where $c_n=(z_n\cdot x)_{v_0}=d(v_0,v_n)$ is strictly increasing. By construction, $\overline F(x)$ is the limit of the sequence $F(v_n)=m^f(x,y,z_n)$. By Proposition~\ref{invariance of straight}, there exists a straight line $\g'\cu Y$ with endpoints $f(x)$ and $f(y)$. Now, the fact that $c_n\ra+\infty$ implies that the points $m^f(x,y,z_n)\in\g'$ converge to $f(x)$. Hence $f(x)=\overline F(x)$.
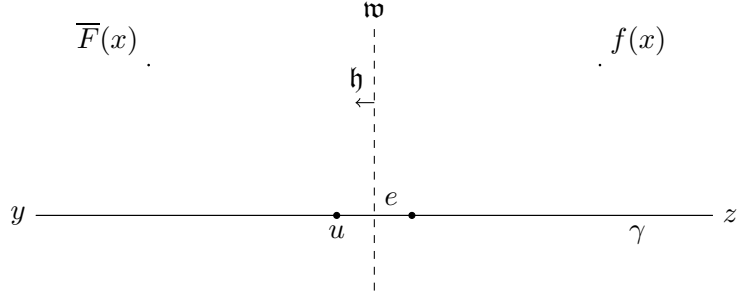
\begin{figure} 
\begin{tikzpicture}
\draw [fill] (-4.5,0) -- (4.5,0);
\node[left] at (-4.5,0) {$y$};
\node[right] at (4.5,0) {$z$};
\node[above right] at (0,0) {$e$};
\draw [dashed] (0,-1) -- (0,2.5);
\node[above] at (0,2.5) {$\mf{w}$};
\draw [->] (0,1.5) -- (-0.25, 1.5);
\node [above left] at (0,1.5) {$\mf{h}$};
\node[below] at (-0.5,0) {$u$};
\draw[fill] (-3,2) circle [radius=0.01cm];
\draw[fill] (3,2) circle [radius=0.01cm];
\node[above left] at (-3,2) {$\overline{F}(x)$};
\node [above right] at (3,2) {$f(x)$};
\draw[fill] (-0.5,0) circle [radius=0.04cm];
\draw[fill] (0.5,0) circle [radius=0.04cm];
\node[below] at (3.5,0) {$\g$};
\end{tikzpicture}
\caption{The case when $x\in\partial X\setminus\partial_sX$.}
\label{non-straight} 
\end{figure}

We are left to handle points $x\in\partial X\setminus\partial_sX$. Suppose for the sake of contradiction that $\overline F(x)$ and $f(x)$ are separated by a hyperplane $\mf{w}$. Consider an edge $e$ crossing $\mf{w}$ and extend $e$ to a straight line $\g$; let $u$ be the endpoint of $e$ on the same side of $\mf{w}$ as $\overline F(x)$. Name $y$ and $z$ the endpoints of $\g$ so that $\mf{w}\in\mscr{W}(\overline F(x),y|f(x),z)$. The situation is portrayed in Figure~\ref{non-straight}. We are going to construct a point $w\in\partial_sX$ such that $\min\{(\overline F(x)\cdot y)_u,(\overline F(x)\cdot w)_u\}<+\infty$ and $m(y,z,w)\in X$. We first show how to use $w$ to conclude the proof. 

First, observe that we have $(\overline F(x)\cdot z)_u=0$ and $(y\cdot z)_u=0$. Our choice of $w$ also implies that $(y\cdot w)_u$, $(z\cdot w)_u$ and at least one among $(\overline F(x)\cdot y)_u$ and $(\overline F(x)\cdot w)_u$ are finite, so $(\overline F(x),y,z,w)\in\mscr{A}(X)$. As $\overline F$ and $f$ coincide on the set ${\{y,z,w\}\cu\partial_sX}$, we have:
\begin{multline}
\nonumber
\Cr\left(\overline F(x),w,y,z\right)=\Cr\left(x,\overline F^{-1}(w),\overline F^{-1}(y),\overline F^{-1}(z)\right)= \\
 =\Cr\left(x,f^{-1}(w),f^{-1}(y),f^{-1}(z)\right)=\Cr\left(f(x),w,y,z\right).
\end{multline}
On the other hand, observe that we have $(\overline F(x)\cdot y)_u\geq(f(x)\cdot y)_u=0$ and $0=(\overline F(x)\cdot z)_u<(f(x)\cdot z)_u$. Hence $\Cr\left(\overline F(x),w,y,z\right)>\Cr\left(f(x),w,y,z\right)$,
a contradiction.
 
Regarding the construction of the point $w$, observe that $\g$ contains at least two vertices $v,v'$ of degree at least $3$; this is because $\overline F(x)$ and $f(x)$ are not straight and project to different points of $\g$. We can assume that $v'\in I(v,z)$. Let $e_y$ and $e_z$ be the only edges at $v$ that lie in $I(y,v)$ and $I(v,z)$, respectively; cf.\ Figure~\ref{construction of w}. Let moreover $\mc{E}$ be the set of edges $\eps$ at $v$ with $\mf{w}(\eps)\in\mscr{W}(v|\overline F(x))$. We distinguish three cases.
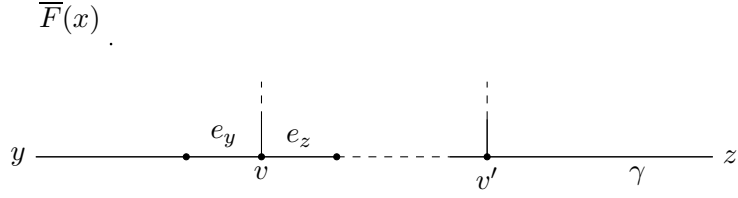
\begin{figure} 
\begin{tikzpicture}
\draw [fill] (-4.5,0) -- (-0.5,0);
\draw [fill] (1,0) -- (4.5,0);
\draw [dashed] (-0.5,0) -- (1,0);
\node[left] at (-4.5,0) {$y$};
\node[right] at (4.5,0) {$z$};
\draw[fill] (-3.5,1.5) circle [radius=0.01cm];
\node[above left] at (-3.5,1.5) {$\overline{F}(x)$};
\draw[fill] (-1.5,0) circle [radius=0.04cm];
\draw[fill] (-2.5,0) circle [radius=0.04cm];
\draw[fill] (-0.5,0) circle [radius=0.04cm];
\draw[fill] (1.5,0) circle [radius=0.04cm];
\node[below] at (-1.5,0) {$v$};
\node[below] at (1.5,0) {$v'$};
\node[below] at (3.5,0) {$\g$};
\node[above] at (-2,0) {$e_y$};
\node[above] at (-1,0) {$e_z$};
\draw [fill] (-1.5,0) -- (-1.5,0.5);
\draw [dashed] (-1.5,0.5) -- (-1.5,1);
\draw [fill] (1.5,0) -- (1.5,0.5);
\draw [dashed] (1.5,0.5) -- (1.5,1);
\end{tikzpicture}
\caption{The general setup for the construction of the point $w$.}
\label{construction of w} 
\end{figure}

{\bf Case~1:} either $(\overline F(x)\cdot y)_v<+\infty$ or $\#\mc{E}=1$. It suffices to pick any edge $\eps$ incident to $v$ and distinct from $e_y$ and $e_z$; extending $\eps$ to a straight ray we obtain $w\in\partial_sX$ with $m(y,z,w)=v$. If $(\overline F(x)\cdot y)_v<+\infty$, we are done. If $(\overline F(x)\cdot y)_v=+\infty$ and $\#\mc{E}=1$, we have $\mc{E}=\{e_y\}$; hence $(\overline F(x)\cdot w)_v=0$ by Lemma~\ref{straight rays 2}.

{\bf Case~2:} $(\overline F(x)\cdot y)_v=+\infty$ and no edge in $\mc{E}$ spans a square with $e_z$. Replacing $v$ with $v'$, we end up again in the situation where $\#\mc{E}=1$, which was handled in the previous case.

{\bf Case~3:} $(\overline F(x)\cdot y)_v=+\infty$, $\#\mc{E}\geq 2$ and some $\eps\in\mc{E}$ spans a square with $e_z$. Since $v$ is not an extremal vertex, there exists an edge $\eps'$ at $v$ that does not span a square with $\eps$. In particular, $\eps'\neq e_z$ and $\eps'\not\in\mc{E}$, which also ensures that $\eps'\neq e_y$. We extend $\eps'$ to a straight ray $r$ and set $w=r^+$. Lemma~\ref{straight rays 2} implies that $m(y,z,w)=v$ and $(\overline F(x)\cdot w)_v=0$. 
\end{proof}

\bibliography{mybib}
\bibliographystyle{alpha}

\end{document}